\documentclass[12pt]{article}
\RequirePackage[OT1]{fontenc}
\RequirePackage[numbers]{natbib}

\usepackage{graphicx,amsmath,amsfonts,amssymb,amsthm,psfrag, framed,thmtools,ulem}
\usepackage[dvipsnames]{xcolor}
\usepackage{fullpage}
\declaretheoremstyle[
spaceabove=6pt, spacebelow=6pt,
headfont=\normalfont\bfseries,
notefont=\mdseries, notebraces={(}{)},
headpunct=.\,— ,
bodyfont=\normalfont,
numbered=no
]{solu}

\usepackage{comment}

\newcommand{\ds}{\displaystyle}

\newcommand{\dint}{\mathrm{d}}

\newcommand{\R}{\mathbb{R}}

\includecomment{comment} 

\newtheorem{thm}{Theorem}[section]

\newtheorem{lemma}[thm]{Lemma}
\newtheorem{proposition}[thm]{Proposition}

\newtheorem{definition}[thm]{Definition}

\newtheorem{rem}[thm]{Remark}

\numberwithin{equation}{section}
\newtheorem{assu}[thm]{Assumption}

\title{Strong approximation of some particular \\ one-dimensional diffusions}
\date{}
\begin{document}
\author{Madalina Deaconu$^1$ and Samuel Herrmann$^2$
\\[5pt]
\small {$^1$Universit\'e de Lorraine, CNRS, Inria, IECL, F-54000 Nancy, France,}\\
\small{Madalina.Deaconu@inria.fr}\\[5pt]
\small{$^2$Institut de Math{\'e}matiques de Bourgogne (IMB) - UMR 5584, CNRS,}\\
\small{Universit{\'e} de Bourgogne Franche-Comt\'e, F-21000 Dijon, France} \\
\small{Samuel.Herrmann@u-bourgogne.fr}
}

\maketitle


\begin{abstract}

We develop a new technique for the path approximation of one-dimensional stochastic processes. Our results apply to the  Brownian motion and to some families of 
stochastic differential equations whose distributions could be represented as a function of a time-changed Brownian motion 
(usually known as $L$ and $G$-classes). We are interested here in the $\varepsilon$-strong approximation. We propose an explicit and quite easy to implement procedure that constructs jointly, the sequences of exit times and corresponding exit positions of some well chosen domains. We prove in our main results the convergence of our scheme and how to control the number of steps which depends in fact on the covering of a fixed time interval by intervals of random sizes. The underlying idea of our analysis is to combine results on Brownian exit times from time-depending domains (one-dimensional heat balls) and classical renewal theory. Numerical examples and issues are also developed in order to complete the theoretical results. 
\end{abstract}

{
\noindent \textbf{Key words:} Strong approximation, path simulation, Brownian motion, linear diffusion.
\par\medskip

\noindent \textbf{2010 AMS subject classifications:} primary 
65C05;   	
secondary 
60J60,       
60J65,       
60G17.      
}%

\section*{An introduction to strong approximation} 
Let $(X_t)_{ t\ge 0}$ be a stochastic process defined on the filtered probability space $(\Omega,{\cal{F}}, ({\cal{F}}_t)_t, \mathbb{P})$ and  $T$ be a fixed positive time. The aim of this study is to develop a new path approximation of $(X_t)_{\ 0\le t\le T}$ where $X_t$ stands either for the  one-dimensional Brownian motion starting in $x$ or for a class of one-dimensional diffusions with non-homogeneous coefficients.   \\
The usual and classical approximation procedure of any diffusion process consists in  constructing numerical schemes like the Euler scheme: the time interval is split into sub-intervals  $0<\frac{T}{n}<\ldots<\frac{n-1}{n}\, T <T$. For each of these time slots, the value of the process is given or approximated. The convergence result of the proposed approximation is then based on classical stochastic convergence theorems: we obtain usually some $L^p$-convergence between the path built by the scheme and the real path of the process. The approximation error is not a.s. bounded  by a constant. \\
In this study, we focus our attention on a different approach: for any $\varepsilon>0$, we construct a suitable sequence of increasing random times $(s_n^\varepsilon)_{n\ge 0}$ with $s^\varepsilon_0=0$, $\lim_{n\to \infty}s_n^\varepsilon=+\infty$  on the space $(\Omega,{\cal{F}},  \mathbb{P})$, and random points $x_0^\varepsilon,x_1^\varepsilon,\ldots,x_n^\varepsilon,\ldots$ in such a way that the random variable $x_n^\varepsilon$ is ${\cal{F}}_{s_n^\varepsilon}$ adapted for any $n\geq 0$ and
\begin{equation}
\label{eq:bound}
\sup_{{t}\in[0,T]}| X_t-x^\varepsilon_t |\le \varepsilon\quad \mbox{a.s.}
\end{equation}
where $x^\varepsilon_t=\sum_{n\ge 0}x_n^\varepsilon1_{\{  s_n^\varepsilon \le t<s^\varepsilon_{n+1}\}}$. The procedure is quite simple to describe, the sequence $(s^\varepsilon_n,x^\varepsilon_n)$ is associated to exit times and exit locations of well-chosen time-space domains
 for the process $(t,X_t)$. 

\noindent  We sketch the main steps of the method here. For this, let us consider a continuous function $\phi_\varepsilon(t)$ which satisfies: there exists $r_\varepsilon>0$ s.t. \\[5pt]
\centerline{${\rm Supp}(\phi_\varepsilon)=[0,r_\varepsilon]$ and $0<\phi_\varepsilon(t)\le \varepsilon$ for any $t\in \mathring{{\rm Supp}}(\phi_\varepsilon)$.}

\vspace*{0.2cm}
\noindent We start with $(s_0^\varepsilon,x^\varepsilon_0)=(0,x)$ where $x$ is the initial point of the path $(X_t)$. Then we define, for any $n\geq 0$
\[
s^\varepsilon_{n+1}:=\inf\{ t\ge s^\varepsilon_n:\  | X_t-X_{s^\varepsilon_n}| \ge \phi_\varepsilon(t-s^\varepsilon_n)\}
\]
and $x^\varepsilon_{n+1}:=X_{s^\varepsilon_{n+1}}$. In other words, $s^\varepsilon_{n+1}$ is related to the first exit time of the stochastic process $(t,X_{s^\varepsilon_{n}+t}-x^\varepsilon_{n})_{t\ge 0}$  from the time-space domain $\{(t,x)\in \mathbb{R}_+\times \mathbb{R} :\ |x|\le \phi_\varepsilon(t)\}$, called $\phi_\varepsilon$-domain in the sequel.

We observe that:
\begin{itemize}
\item[-] as the function $\phi_\varepsilon$ is bounded, the bound \eqref{eq:bound} is satisfied 
\item[-] as the $\phi_\varepsilon$ has a compact support, the sequence $(s^\varepsilon_n)$ satisfies $s^\varepsilon_{n+1}-s^\varepsilon_n\le r_\varepsilon$, for any $n\geq0$.
\end{itemize}
For such an approximation of the paths, the challenge consists in the choice of an appropriate function $\phi_\varepsilon$ defining the $\phi_\varepsilon$-domain in such a way that the simulation of both the exit time and the exit location is easy to construct and implement. Moreover the analysis of the random scheme is based on a precise description of the number of random intervals $[s^\varepsilon_n,s^\varepsilon_{n+1}[$ required in order to cover $[0,T]$. Such an analysis is developed in the next section.

Our main motivation is to develop a new approach that gives the $\varepsilon$-strong approximation for a large class of multidimensional SDEs. In this paper the main tools and results of this topic are developed for some particular SDEs in one dimension. We intend to pursue this research for more general situations starting with the multidimensional Brownian motion and Bessel processes.

The study of the strong behaviour of an approximation scheme, and in particular the characterisation by some bounds depending on $\varepsilon$ of $\sup_{t\in[0,T]}\Vert  X_t-x^\varepsilon_t \Vert$ where $x^\varepsilon_t $ stands for an approximation scheme,  was considered recently by some other authors. In Blanchet, Chen and Dong \cite{blanchet-chen-dong-2017} the authors study the approximation of multidimensional SDEs by considering transformations of the underlying Brownian motion (the so-called It\^o-Lyons map) and follow a rough path theory approach. In this paper the authors refer to the class of procedures which achieve the construction of such an approximation as Tolerance-Enforced Simulation (TES) or $\varepsilon$-strong simulation methods.  In Chen and Huang \cite{chen-huang-2013} a similar question is considered but the result is obtained only for SDEs in one dimension and the effective construction of an approximation scheme is not obvious. This last procedure was extended by Beskos, Peluchetti and Roberts \cite{beskos-peluchetti-roberts-2012} were an iterative sampling method, which delivers upper and lower bounding processes for the Brownian path, is given.  Let us finally mention the recent manuscript \cite{chen2019epsilonstrong} which highlights  an adaptation of such an approach to the fractional Brownian motion framework.

In a more general context, Hefter, Herzwurm and M\"uller-Gronbach \cite{hefter-herzwurm-mullergronbach-2019} give lower error bounds for the pathwise approximation of scalar SDEs, the results are based on the observations of the driving Brownian motion. Previously the notion of strong convergence was studied also intensively for particular processes like the CIR process. Strong convergence without rate was obtained by Alfonsi \cite{alfonsi-2005} or Hutzenthaler and Jentzen \cite{hutzenthaler-jentzen-2015}. Optimal lower and upper bounds were also given. For stochastic differential equations with Lipschitz coefficients M\"uller-Gronbach \cite{mullergronbach-2002} and
Hofmann, M\"uller-Gronbach and Ritter \cite{hofmann-al-2002} obtained lower error bounds. 
 
All these results give a new and interesting highlight in this topic of pathwise and $\varepsilon$-strong approximation,
and prove how such an approach becomes an essential tool in the numerical approximation of SDEs. The procedure that we point out in this paper totally belongs to this promising field: we give an explicit and constructive procedure for the approximation of some particular SDEs. 
The corresponding numerical scheme is easy to implement and belongs both to the family of free knot spline approximations of scalar diffusion paths and to the family of $\varepsilon$-strong approximations. The complexity of the scheme is therefore directly linked to the number of knots required in order to describe precisely the stochastic paths on a given time-interval: Creutzig, M\"{u}ller-Gronbach and Ritter \cite{creutzig2007free} pointed out the smallest possible average sup-norm error depending on the average number of free knots. The important feature of the new approach of the current work is to emphasize a efficient scheme and its complexity, especially worthy for many applications. The method is essentially based on explicit distributions of the exit time 
for time-space domains, closely related to the behaviour of the underlying process. By performing a rigorous analysis we identify sharp estimates of the number of free knots. The analysis of the number of free knots is central in our approach. 

For practical purposes the approximation scheme is the object of interest and in order to characterize and control its behaviour we are looking for sequences which have the same distribution. We need thus to introduce the following definition:
\begin{definition}  Let $(\Omega, {\cal{F}},\mathbb{P})$ be a probability space and $(X_t)$ be a stochastic process on this space. The random process $(y_t^\varepsilon)$ is an $\varepsilon$-strong approximation of the stochastic process $(X_t)$ if there exists a stochastic process $(x_t^\varepsilon)$ on $(\Omega, {\cal{F}},\mathbb{P})$  satisfying \eqref{eq:bound} such that $(y_t^\varepsilon)$ and $(x_t^\varepsilon)$ are identically distributed.
\end{definition}

The material is organized as follows. In Section \ref{sec:nbr}, we focus our attention on the number of space-time domains used for building the approximated path on a given fixed time interval $[0,T]$. This number is denoted by $N_T^\varepsilon$. The main specific feature related to our approach is the randomness associated to the time splitting. A sharp description of the random number of time steps $N_T^\varepsilon$ permits to emphasize the efficiency of the $\varepsilon$-strong simulation. The first section points some information in a quite general framework, that is $\varepsilon^2\mathbb{E}[N_T^\varepsilon]$ is upper bounded in the $\varepsilon$ small limit, while the forthcoming sections permit to go into details for specific diffusion processes.  Section \ref{sec:brow} introduces the particular Brownian case and families of one-dimensional diffusions ($L$-class and $G$-class of diffusion in particular) are further explored in Section \ref{sec:diff}. In each case, an algorithm based on a specific $\phi_\varepsilon$-domain (heat ball) is presented (Theorem \ref{thm:BM} and Theorem \ref{thm:General}) and the efficiency of the approximation is investigated (Proposition \ref{prop:ren} and Theorem \ref{thm:efficient:diff}). We obtain the convergence towards an explicit limit for the average expression $\varepsilon^2\mathbb{E}[N_T^\varepsilon]$ as $\varepsilon$ tends to $0$. In the particular diffusion case, there exists a constant $\mu>0$ such that
\begin{equation}
\label{eq:thm:diff:efficient-intro}
\lim_{\varepsilon\to 0}\varepsilon^2\,\mathbb{E}[N^\varepsilon_T]=\mu\,\mathbb{E}\left[ \ds\int_0^{\rho(T)}  \frac{1}{\eta^2(x+B_s)}\dint s\right],\quad \forall (T,x)\in\R_+\times\R
\end{equation}
where $\eta$ and $\rho$ are both functions related to the approximation procedure and $(B_t)_{t\ge 0}$ stands for a standard one-dimensional Brownian motion.

Finally numerical examples permit to illustrate the convergence result of the algorithm in the last section.

\section{Number of random intervals needed for covering the time interval $[0,T]$}
\label{sec:nbr}
The sharpness of the approximation is deeply related to the number of random intervals $[s^\varepsilon_n,s^\varepsilon_{n+1}[$ used to cover $[0,T]$. If $(X_t)$ is a homogeneous Markovian process, then we observe that $U_{n+1}^\varepsilon=s_{n+1}^\varepsilon-s_n^\varepsilon$, for $n\ge 0$, is a sequence of i.i.d. a.s. bounded variables. Obviously we have:
\begin{equation*}
s_n^\varepsilon =\ds\sum_{i=1}^n U_i^\varepsilon.
\end{equation*}
 The number of variates corresponds to
\[
N_T^\varepsilon:=\inf\{ n\ge 1:\ s^\varepsilon_n\ge T \}.
\]
We  can control, for any $j\in\mathbb{N}$ and  $\lambda>0$:
\begin{align}\label{eq:markov}
\mathbb{P}(N_T^\varepsilon>j)=\mathbb{P}(s_j^\varepsilon<T)=\mathbb{P}(e^{-\lambda s_j^\varepsilon}>e^{-\lambda T})\le e^{\lambda T}\mathbb{E}[e^{-\lambda s_j^\varepsilon}]=e^{\lambda T}\mathbb{E}[e^{-\lambda U_1^\varepsilon}]^j.
\end{align}
This calculus proves that the upper-bound essentially depends on the Laplace transform of $U^\varepsilon_1$.

\label{constantes}
Before stating a first result let us give an important convention. All along the text we need to control (upper or lower bounds) several quantities. In order to do this we use $C$ and $\kappa$ to design positive constants, whose value may change from one line to the other. 
When the constants depend on parameters of prime interest, we use, for example, the notation $C_{T,\alpha }$  to suggest that the constant $C$ depends in some way on $T$ and $\alpha$, where $T$ and $\alpha$ denote here some parameters.


\begin{proposition}\label{lem:lem2} For $\varepsilon >0$, let us assume that $U^\varepsilon_1 \stackrel{(d)}{=} \varepsilon^2 U$ where $U$ is a positive random variable which does not depend on the parameter $\varepsilon$.\\
1. If there exist two constants $C>0$ and $\kappa>0$ such that $\mathbb{E}[e^{-\lambda U}]\le \frac{C}{\lambda^\kappa}$ for all $\lambda>0$, then 
\begin{equation}\label{eq:upper-prob}
\mathbb{P}(N_T^\varepsilon>j)\le \left( \frac{eTC^{1/\kappa}}{j\kappa\varepsilon^2} \right)^{j\kappa},\quad \forall j\in\mathbb{N}.
\end{equation}
2. If $\mathbb{E}[U^2]<\infty$, then for any $\delta>1$, there exists $\varepsilon_0>0$ such that 
\[
\mathbb{E}[N_T^\varepsilon]\le \frac{\delta eT}{\varepsilon^2 \mathbb{E}[U]},\quad \forall \varepsilon\le \varepsilon_0.
\]
\end{proposition}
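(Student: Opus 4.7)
The plan is to start from the Chernoff-type bound \eqref{eq:markov} derived just above the proposition, namely $\mathbb{P}(N_T^\varepsilon>j)\le e^{\lambda T}\,\mathbb{E}[e^{-\lambda U_1^\varepsilon}]^{j}$, and exploit the scaling $U_1^\varepsilon\stackrel{(d)}{=}\varepsilon^2 U$ to turn every statement about $U_1^\varepsilon$ into a statement about $U$. The two parts then correspond to two different regimes: Part~1 uses the polynomial decay of the Laplace transform at infinity together with an optimization in $\lambda$, while Part~2 uses the quadratic expansion of the Laplace transform near $0$ together with summation of a geometric series.

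For Part~1, I would substitute $\mathbb{E}[e^{-\lambda U_1^\varepsilon}]=\mathbb{E}[e^{-\lambda\varepsilon^2 U}]\le C/(\lambda\varepsilon^2)^\kappa$ into \eqref{eq:markov}, obtaining
\[
\mathbb{P}(N_T^\varepsilon>j)\le e^{\lambda T}\frac{C^{j}}{\lambda^{j\kappa}\varepsilon^{2j\kappa}}.
\]
The exponent in $\lambda$ is $\lambda T-j\kappa\log\lambda$, minimized at $\lambda^{*}=j\kappa/T$, which is the natural choice. Plugging this value back in and regrouping the powers of $j\kappa$, $T$ and $\varepsilon^{2}$ produces exactly the stated bound $(eTC^{1/\kappa}/(j\kappa\varepsilon^{2}))^{j\kappa}$.

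For Part~2, I would sum the tail bound to get
\[
\mathbb{E}[N_T^\varepsilon]=\sum_{j\ge 0}\mathbb{P}(N_T^\varepsilon>j)\le \frac{e^{\lambda T}}{1-\mathbb{E}[e^{-\lambda U_1^\varepsilon}]},
\]
valid whenever the Laplace transform is $<1$. The key analytic input is the elementary inequality $1-e^{-x}\ge x-x^{2}/2$, which under the assumption $\mathbb{E}[U^{2}]<\infty$ yields
\[
1-\mathbb{E}[e^{-\lambda\varepsilon^{2}U}]\ge \lambda\varepsilon^{2}\mathbb{E}[U]-\tfrac12\lambda^{2}\varepsilon^{4}\mathbb{E}[U^{2}].
\]
Choosing $\lambda=1/T$ one gets
\[
\mathbb{E}[N_T^\varepsilon]\le \frac{eT}{\varepsilon^{2}\mathbb{E}[U]}\cdot\frac{1}{1-\varepsilon^{2}\mathbb{E}[U^{2}]/(2T\mathbb{E}[U])}.
\]
The second factor tends to $1$ as $\varepsilon\to 0$, so it can be made $\le\delta$ by taking $\varepsilon\le\varepsilon_{0}$ for a suitable $\varepsilon_{0}=\varepsilon_{0}(\delta,T,U)>0$, which gives the required bound.

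I do not anticipate a genuine obstacle: the only subtlety is making sure that the two assumptions are used in the correct regime of $\lambda$. In Part~1 the hypothesis $\mathbb{E}[e^{-\lambda U}]\le C\lambda^{-\kappa}$ is most informative for large $\lambda$, and the optimal $\lambda^{*}=j\kappa/T$ indeed blows up with $j$, so the hypothesis is applied where it is meaningful. In Part~2 the relevant values of $\lambda$ are bounded (we pick $\lambda=1/T$), so one really needs the local, second-order behaviour of the Laplace transform at $0$, which is precisely what $\mathbb{E}[U^{2}]<\infty$ provides via the Taylor-type lower bound above. The rest is bookkeeping.
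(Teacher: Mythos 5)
Your proposal is correct and follows essentially the same route as the paper: Part~1 is the identical Chernoff bound with the optimal choice $\lambda=j\kappa/T$, and Part~2 sums the geometric series and picks $\lambda=1/T$ exactly as in the paper's proof. The only (welcome) difference is that you replace the paper's Taylor expansion with the explicit inequality $1-e^{-x}\ge x-x^{2}/2$, which makes the choice of $\varepsilon_{0}$ quantitative rather than hidden in an $o(\lambda\varepsilon^{2})$ term.
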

\begin{proof} For the result in 1., by using both the Markov property \eqref{eq:markov} and the condition concerning the Laplace transform of $U$, we obtain:
\[
\mathbb{P}(N_T^\varepsilon>j)\le e^{\lambda T}\left(\frac{C }{\varepsilon^{2\kappa}\lambda^\kappa}\right)^j,\quad \forall \lambda>0.
\]
By choosing the optimal value of $\lambda$ given by $\lambda=\frac{j\kappa}{T}$ we obtain \eqref{eq:upper-prob}. \\
For the result in 2., we can also remark that \eqref{eq:markov} leads to
\begin{align}\label{eq:eqbound}
\mathbb{E}[N_T^\varepsilon]=\sum_{j\ge 0}\mathbb{P}(N_T^\varepsilon> j)\le \frac{e^{\lambda T}}{1-\mathbb{E}[e^{-\lambda U_1^\varepsilon}]}.
\end{align}
If 
\(
\mathbb{E}[U^2]<\infty,
\)
 we get
\[
\mathbb{E}[e^{-\lambda U_1^\varepsilon}]=1-\lambda \varepsilon^2\mathbb{E}[U]+o(\lambda\varepsilon^2),\quad \lambda>0.
\]
The particular choice $\lambda=1/T$ implies the announced result.
\end{proof}
\begin{rem} 
If the condition \(
\mathbb{E}[U^2]<\infty,
\) is not satisfied, we can construct another approach which leads to less sharp bounds. Indeed if $N^\varepsilon_T$ denotes the number of r.v. $(U_n^\varepsilon)$ such that $s_{N^\varepsilon_T}\ge T$, then the strong Markov property implies
\[
\mathbb{E}[N_T^\varepsilon]\le k\mathbb{E}[N^\varepsilon_{T/k}], \quad \forall k\in\mathbb{N}. 
\]
Taking $\lambda=k/T$ in \eqref{eq:eqbound} and afterwards $k=\lfloor T/\varepsilon^2\rfloor$ we obtain
\[
\mathbb{E}[N_T^\varepsilon]\le \frac{ke}{1-\mathbb{E}[e^{-k\varepsilon^2 U/T}]} =\frac{\lfloor T/\varepsilon^2\rfloor e}{1-\mathbb{E}[e^{-\lfloor T/\varepsilon^2\rfloor\varepsilon^2 U/T}]}\sim \frac{eT}{\varepsilon^2 (1-\mathbb{E}[e^{-U}])}\ \mbox{as}\ \varepsilon\to 0.
\]
This result is less sharp than the statement of Proposition \ref{lem:lem2} since $1-\mathbb{E}[e^{-U}]\le \mathbb{E}[U]$ but it holds even if the second moment of $U$ is not finite.
\end{rem}
Let us just mention that the large deviations theory cannot lead to interesting bounds in our case. Indeed the rate function $I$ used in Cramer's theorem satisfies:
\[
\limsup_{n\to\infty}n\ln\mathbb{P}(N_T^\varepsilon>n)\le \limsup_{n\to\infty}n\ln\mathbb{P}(s^\varepsilon_n\le T)=-\inf_{x\in[0,T]}I(x)=-\infty.
\]

\section{Approximation of one-dimensional Brownian paths
\label{sec:brow}
}%
We recall that for our approach it is essential to find a function $\phi_\varepsilon$ with compact support $[0,r_\varepsilon]$ which satisfies $\sup_{t\in[0,r_\varepsilon]}\phi_\varepsilon(t)=\varepsilon$ and such that the exit time $s_1^\varepsilon$ of the $\phi_\varepsilon$-domain is simple to generate. 

The choice of $\phi_\varepsilon$ is directly related to the method of images described by Lerche \cite{lerche-1986} and to the heat equation on some particular domain, called heat-balls and defined in Evans, Section 2.3.2, \cite{evans-2010}. More recent results on this subject can be found in \cite{deaconu-herrmann-2013}, \cite{deaconu-herrmann-2017-2} and \cite{deaconu-herrmann-2017-1}.
\begin{framed}
\centerline{\sc Brownian Skeleton $(BS)_\eta$}

\vspace*{0.2cm}
\begin{enumerate}
\item Let $\varepsilon>0$. We define 
$\phi_\varepsilon(t):=\sqrt{t\ln(\varepsilon^2 e/t)}$, for $t\in I_\varepsilon:=[0,r_\varepsilon]$ 
with $r_\varepsilon=e\varepsilon^2$.
\item Let $(A_n)_{n\ge 1}$  be a sequence of independent random variables with gamma distribution ${\Gamma}(3/2,2)$
\item Let $(Z_n)_{n\geq 1}$ be a sequence of i.i.d. Rademacher random variables (taking values +1 or -1 with probability 1/2). The sequences  $(A_n)_{n\ge 1}$ and $(Z_n)_{n\geq 1}$ are independent.
\end{enumerate}

\noindent
{\bf  Definition:} For $\varepsilon>0$ and for any  function $\eta:\mathbb{R}\to\mathbb{R}_+$, the \emph{Brownian skeleton} $(BS)_\eta$ corresponds to 
\[
\Big((U_n^\varepsilon)_{n\ge 1},(s_n^\varepsilon)_{n\ge 1},(x_n^\varepsilon)_{n\ge 0}\Big)\quad\mbox{with}\quad \left\{
\begin{array}{l}
U_n^\varepsilon=\varepsilon^2 \eta^2(x_{n-1}^\varepsilon)\,e^{1-A_n},\quad 
 s^\varepsilon_n=\ds\sum_{k=1}^nU_k^\varepsilon,\\[18pt]
  x_{n}^\varepsilon=x_{n-1}^\varepsilon+ Z_n \,\eta(x_{n-1}^\varepsilon)\phi_\varepsilon\left(\ds\frac{U_{n}^\varepsilon}{ \eta^{2}(x_{n-1}^\varepsilon)}\right),\ \forall n\ge 1 \\
 \end{array}
 \right.
\]
and $x_0^\varepsilon=x$, where $x\in \mathbb{R}$ fixed.

\end{framed}

\begin{thm}\label{thm:BM}
Let $\varepsilon>0$ and let us consider a Brownian skeleton $({\rm BS})_{\eta}$ with $\eta\equiv 1$. Then $x^\varepsilon_t=\sum_{n\ge 0}x_n^\varepsilon1_{\{  s_n^\varepsilon \le t<s^\varepsilon_{n+1}\}}$ is an $\varepsilon$-strong approximation of the Brownian paths starting in $x$. Moreover the number of approximation points on the fixed interval $[0,T]$ satisfies:
\begin{equation}\label{eq:upper-prob1}
\mathbb{P}(N_T^\varepsilon>j)\le \left( \frac{\beta' T\omega(3/2,2,\beta' )^{\beta'}}{j\varepsilon^2} \right)^{j/\beta'},\quad \forall j\in\mathbb{N},\ \forall \beta'>2,
\end{equation}
with $\omega$ a constant defined in the appendix, \eqref{eq:bound>alpha}. Moreover, for every $\delta >1$ there exists $\varepsilon_0 >0$, such that the following upper-bound holds,
\[
\mathbb{E}[N_T^\varepsilon]\le \frac{3\sqrt{3}\delta T}{\varepsilon^2},\quad \forall \varepsilon\le \varepsilon_0.
\]
\end{thm}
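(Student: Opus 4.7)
The argument decomposes the statement into three claims, with the crux lying in a single distributional identification. Let $W$ be a standard Brownian motion started at $0$ and $\tau_\varepsilon:=\inf\{t\ge 0:\ |W_t|\ge\phi_\varepsilon(t)\}$ its first exit time from the time-space heat ball $\{(t,y):\ |y|\le\phi_\varepsilon(t)\}$ (in the sense of \cite{evans-2010}). Using the method of images of Lerche \cite{lerche-1986} — in the formulation of \cite{deaconu-herrmann-2013} — I would verify that $\tau_\varepsilon$ has the same law as $\varepsilon^2 e^{1-A}$ with $A\sim\Gamma(3/2,2)$, and that, conditionally on $\tau_\varepsilon$, the exit position takes the values $\pm\phi_\varepsilon(\tau_\varepsilon)$ with equal probability (the latter being immediate from the reflection symmetry of $W$ and of the domain). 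This is the main obstacle: the choice $\phi_\varepsilon(t)=\sqrt{t\ln(\varepsilon^2 e/t)}$ is reverse-engineered precisely so that the first passage law takes this gamma-type form, and it is this closed-form identity that makes the scheme implementable.

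Claim (a) then follows by a strong Markov coupling. Given a Brownian motion $(B_t)$ starting at $x$, define recursively $\sigma_0=0$, $\hat x_0=x$, $\sigma_{n+1}:=\inf\{t\ge\sigma_n:\ |B_t-B_{\sigma_n}|\ge\phi_\varepsilon(t-\sigma_n)\}$ and $\hat x_{n+1}:=B_{\sigma_{n+1}}$. By the strong Markov property and the identification above, the sequence $((\sigma_{n+1}-\sigma_n,\,\hat x_{n+1}-\hat x_n))_{n\ge 0}$ is i.i.d.\ with the joint law of $((U_{n+1}^\varepsilon,x_{n+1}^\varepsilon-x_n^\varepsilon))_{n\ge 0}$. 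Hence $\hat x_t^\varepsilon:=\sum_n\hat x_n\mathbf{1}_{\{\sigma_n\le t<\sigma_{n+1}\}}$ is equidistributed with $x_t^\varepsilon$, and by construction $|B_t-\hat x_t^\varepsilon|\le\phi_\varepsilon(t-\sigma_n)\le\varepsilon$ for $t\in[\sigma_n,\sigma_{n+1})$, yielding the $\varepsilon$-strong approximation property \eqref{eq:bound}.

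The two bounds on $N_T^\varepsilon$ then follow from Proposition~\ref{lem:lem2} applied to $U=e^{1-A_1}$. For (b), the appendix estimate \eqref{eq:bound>alpha} supplies, for every $\beta'>2$, a constant $\omega(3/2,2,\beta')$ with $\mathbb{E}[e^{-\lambda U}]\le\omega(3/2,2,\beta')\,e^{-1/\beta'}\,\lambda^{-1/\beta'}$; setting $\kappa=1/\beta'$ and $C=\omega(3/2,2,\beta')\,e^{-1/\beta'}$ in Proposition~\ref{lem:lem2}.1 yields $C^{1/\kappa}=\omega(3/2,2,\beta')^{\beta'}/e$, and substituting this into \eqref{eq:upper-prob} produces \eqref{eq:upper-prob1} after cancellation of the factor $e$. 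For (c), the moment $\mathbb{E}[U^2]$ is finite: using the gamma Laplace transform $\mathbb{E}[e^{-\lambda A_1}]=(1+2\lambda)^{-3/2}$ (valid for $\lambda>-1/2$), we obtain $\mathbb{E}[U^2]=e^2\mathbb{E}[e^{-2A_1}]=e^2/5^{3/2}<\infty$. The same computation at $\lambda=1$ yields $\mathbb{E}[U]=e\,\mathbb{E}[e^{-A_1}]=e\cdot 3^{-3/2}=e/(3\sqrt{3})$, so Proposition~\ref{lem:lem2}.2 delivers, for every $\delta>1$ and $\varepsilon$ small enough, $\mathbb{E}[N_T^\varepsilon]\le \delta eT/(\varepsilon^2\mathbb{E}[U])=3\sqrt{3}\,\delta T/\varepsilon^2$, as claimed.
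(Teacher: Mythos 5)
Your proposal is correct and follows essentially the same route as the paper: identify the heat-ball exit law as $\varepsilon^2 e^{1-A}$ with $A\sim\Gamma(3/2,2)$ via the method of images (the paper cites Lerche and Proposition 2.2 of \cite{deaconu-herrmann-2017-2} with $\nu=-1/2$, $a=\varepsilon\sqrt{e\pi/2}$, and writes the explicit density $f_{U_1^\varepsilon}(t)=\phi_\varepsilon(t)/(\varepsilon\sqrt{2e\pi}\,t)$), use symmetry for the exit position, and then feed $U=e^{1-A_1}$ into Proposition~\ref{lem:lem2} together with Lemma~\ref{lem1}. Your algebra for the exponents in \eqref{eq:upper-prob1} and the moments $\mathbb{E}[U]=e\,3^{-3/2}$, $\mathbb{E}[U^2]=e^2 5^{-3/2}$ checks out, and your explicit strong Markov coupling merely spells out what the paper leaves implicit.
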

\begin{proof} First we remark easily that $\sup_{t\in I_\varepsilon}\phi_\varepsilon(t)=\varepsilon$ as required. So we start the skeleton of the Brownian paths $({\rm BS})_{1}$ %
with the starting time-space value $(0,x_0^\varepsilon=x)$. Then $(0+U_1^\varepsilon, x_0^\varepsilon+Z_1\phi_\varepsilon(U_1^\varepsilon))$ stands for the first exit time and exit location of the time-space domain originated in $(0,x_0^\varepsilon)$ whose boundary is defined by $\phi_\varepsilon$. 

The second step is like the first one, it suffices to consider the new starting point $(s_1^\varepsilon,x_1^\varepsilon):=(U_{1}^\varepsilon, x_0^\varepsilon+Z_1\phi_\varepsilon(U_1^\varepsilon))$ and so on... Using the results obtained in Lerche \cite{lerche-1986} and Deaconu - Herrmann \cite{deaconu-herrmann-2017-2} (Proposition 2.2 with $\nu=-1/2$ and $a=\varepsilon\sqrt{e\pi/2}$), we know that these exit times are distributed like exponentials of gamma random variables (see, for instance, \cite{deaconu-herrmann-2017-2} Proposition A.2). In particular, the probability density function of $U_1^\varepsilon$ satisfies:
\[
f_{U_1^\varepsilon}(t)=\frac{\phi_\varepsilon(t)}{\varepsilon\sqrt{2e\pi}t}=\frac{\sqrt{\ln(\varepsilon^2 e/t)}}{\varepsilon\sqrt{2e\pi t}}1_{I_\varepsilon}(t),  \quad\forall t\in \mathbb{R}.
\]
We deduce that $U_1^\varepsilon$ and $e\varepsilon^2 W$ defined in Lemma \ref{lem1} are identically distributed (with the parameters $\alpha=\frac{3}{2}$ and $\beta=2$). By Lemma \ref{lem1}, we get for any $\beta'>2$
\[
\mathbb{E}[e^{-\lambda U_1^\varepsilon}]\le \omega\Big(\frac{3}{2},2,\beta'\Big)\left(\frac{1}{e\varepsilon^2\lambda}\right)^{1/\beta'}.
\]
Proposition \ref{lem:lem2} permits to obtain the bounds of the number of points needed to approximate the Brownian paths on the interval $[0,T]$, as $\mathbb{E}(W)=\frac{1}{3 \sqrt{3}}$.
\end{proof}
\begin{rem}
\label{rem:generaleta}
\begin{enumerate} \item Let us just notice that for $U$ a standard uniformly distributed r.v. and $G$ a standard Gaussian r.v. independent of $U$, $W=U^2e^{-G^2}$ is random variable with the PDF presented in Lemma \ref{lem1} associated to the parameters $\alpha=\frac{3}{2}$ and $\beta=2$ (for more details see \cite{devroye}, Chapter IX.3). 
\item  A similar approach will be used in the proof of Theorem \ref{thm:General}.  In Theorem \ref{thm:BM} we obtain that for $\eta\equiv 1$, we can construct a sequence of successive points corresponding to the exit time and location of $\varepsilon$-small spheroids and belonging to the Brownian trajectory. Moreover this sequence has the same distribution as $(U_n^\varepsilon, x_n^\varepsilon)_{n\ge 1}$. This procedure can also be considered for general $\eta$: $\frac{U_{n+1}^\varepsilon}{\eta^2(x_n^\varepsilon)}$ has then the same distribution as the Brownian first exit time of the $\varepsilon$-small spheroids. Therefore for any $t\in[s_n^\varepsilon,s_{n+1}^\varepsilon]$, we get
\[
|x_0^\varepsilon+B_t-x_n^\varepsilon|\le \varepsilon\eta(x_{n}^\varepsilon),
\]
where $B$ stands for the standard Brownian motion.

\end{enumerate}
\end{rem}
We can easily improve the description of the number of approximation points. Since $(U_n^\varepsilon)_{n\ge 0}$ is a sequence of independent random variables, $(N^\varepsilon_t)_{t\ge 0}$ is a renewal process and the classical asymptotic description holds:
\begin{proposition}\label{prop:ren} We consider the Brownian skeleton $(BS)_\eta$ for $\eta=1$. 
We define, as previously
\[
N_T^\varepsilon:=\inf\{ n\ge 1:\ s^\varepsilon_n\ge T \}.
\]
the number of approximation points needed to cover the time interval $[0,T]$ for $T$ a fixed positive time. 
Then:
\[
\lim_{\varepsilon\to 0}\varepsilon^2\mathbb{E}[N^\varepsilon_T]=\frac{T}{e}\, 3^{3/2}.
\]
Moreover the following central limit theorem holds:
\[
\ds\lim_{\varepsilon \to 0} \sqrt{\frac{\mu^3}{\varepsilon^2\sigma^2 T}}\Big( \varepsilon^2 N^\varepsilon_T -\frac{T}{e}\, 3^{3/2}\Big) = G \quad \mbox{in distribution},
\]
where $G$ is a $\mathcal{N}(0,1)$ standard Gaussian random variable, $\mu=e\, 3^{-3/2}\approx 0.5231336$ and $\sigma^2=(5^{-3/2}-3^{-3})\, e^2\approx 0.3872285 $.
\end{proposition}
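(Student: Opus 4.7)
The plan is to exploit the fact that when $\eta\equiv 1$ the increments $U_n^\varepsilon = \varepsilon^2 e^{1-A_n}$ are i.i.d., so $(s_n^\varepsilon)_{n\ge 0}$ is an ordinary renewal process. The right framing is to rescale time: write $s_n^\varepsilon = \varepsilon^2 \tilde s_n$ with $\tilde s_n = \sum_{k=1}^n \tilde U_k$ and $\tilde U_k \stackrel{(d)}{=} e\,W$, where, following Remark \ref{rem:generaleta}, $W \stackrel{(d)}{=} V^2 e^{-G^2}$ for $V\sim \mathcal{U}([0,1])$ and $G\sim \mathcal{N}(0,1)$ independent. Then
\[
N_T^\varepsilon = \inf\{n\ge 1 : \tilde s_n \ge T/\varepsilon^2\} = \tilde N_{T/\varepsilon^2},
\]
with $\tilde N$ the renewal counting process of the sequence $(\tilde U_k)$. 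Since $\tilde U_1 \le e$ almost surely, every moment of $\tilde U_1$ is finite, which removes all integrability issues for the results to come.

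The key computation is then the mean and the variance of $\tilde U_1$. Using independence of $V$ and $G$ and the Gaussian integral $\mathbb{E}[e^{-nG^2}] = (2n+1)^{-1/2}$, one gets
\[
\mathbb{E}[W^n] = \mathbb{E}[V^{2n}]\,\mathbb{E}[e^{-nG^2}] = \frac{1}{2n+1}\cdot\frac{1}{\sqrt{2n+1}} = (2n+1)^{-3/2},
\]
so $\mu := \mathbb{E}[\tilde U_1] = e\cdot 3^{-3/2}$ and $\sigma^2 := \mathrm{Var}(\tilde U_1) = e^2(5^{-3/2} - 3^{-3})$, matching the numerical values in the statement.

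The two limits are then direct consequences of classical renewal theory applied at the deterministic time $t = T/\varepsilon^2$: the elementary renewal theorem yields $\mathbb{E}[\tilde N_t]/t \to 1/\mu$ as $t\to\infty$, whence $\varepsilon^2 \mathbb{E}[N_T^\varepsilon] = (\varepsilon^2/T)\cdot T\cdot \mathbb{E}[\tilde N_{T/\varepsilon^2}] \to T/\mu = 3^{3/2}T/e$; and the renewal central limit theorem (Anscombe) gives
\[
\frac{\tilde N_t - t/\mu}{\sqrt{t\sigma^2/\mu^3}} \xrightarrow[t\to\infty]{(d)} \mathcal{N}(0,1),
\]
which after substitution of $t=T/\varepsilon^2$ and a trivial rearrangement is precisely the stated CLT. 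The only substantive work is the explicit moment calculation for $W$; everything else amounts to invoking standard renewal-theoretic results, painless here thanks to the uniform bound $0<\tilde U_1\le e$ that makes moment hypotheses automatic.
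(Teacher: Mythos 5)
Your proof is correct and follows essentially the same route as the paper: identify $N_T^\varepsilon$ with a renewal counting process evaluated at $t=T/\varepsilon^2$, compute the interarrival mean and variance (the paper does this via the Laplace transform $\mathcal{L}_{A_1}(s)=(2s+1)^{-3/2}$ of the gamma variable rather than via the representation $W=V^2e^{-G^2}$, but the moment identity $\mathbb{E}[W^n]=(2n+1)^{-3/2}$ is the same), and then invoke the elementary renewal theorem and the renewal CLT.
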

\begin{proof}
Let us consider $(\overline{N}_t)_{t\ge 0}$ a renewal process with interarrivals $(e^{1-A_n})_{n\ge 1}$ independent random variables defined in  Theorem \ref{thm:BM}. The interarrival time satisfies $\mathbb{E}[e^{1-A_1}]=e\mathcal{L}_{A_1}(1)$ where $\mathcal{L}_{A_1}$ stands for the Laplace transform of $A_1$. Since in our case $A_1$ is gamma distributed, it is well known that $\mathcal{L}_{A_1}(s)=(2s+1)^{-3/2}$. \\
We use here classical results for the renewal theory, see for example \cite{daley-verejones-2002}. The elementary renewal theorem leads to 
\[
\lim_{t\to\infty}\frac{\mathbb{E}[\overline{N}_{t}]}{t}=\frac{1}{\mathbb{E}[e^{1-A_1}]}=\frac{3^{3/2}}{e}.
\]
In order to obtain the first part of the statement, it suffices to observe that:
\begin{align*}
N_T^\varepsilon&=
\inf\Big\{n\ge 0: \ \sum_{k=1}^ne^{1-A_k}\ge \frac{T}{\varepsilon^2}\Big\}=\overline{N}_{T/\varepsilon^2}.
\end{align*}
We deduce that
\[
\lim_{\varepsilon\to 0}\varepsilon^2 \mathbb{E}[N_T^\varepsilon]=\lim_{\varepsilon\to 0}\varepsilon^2\mathbb{E}[\overline{N}_{T/\varepsilon^2}]=\lim_{t\to\infty}T\frac{\mathbb{E}[\overline{N}_t]}{t}=\frac{T}{e}\,3^{3/2}.
\]
The same argument holds for the CLT: if we denote by $\mu=\mathbb{E}[e^{1-A_1}]$ and $\sigma^2={\rm Var}(e^{1-A_1})$ then
\[
\ds\lim_{\varepsilon \to 0} \sqrt{\frac{t\mu^3}{\sigma^2}}\Big( \frac{\overline{N}_t}{t} -\frac{1}{\mu}\Big) = G \quad\mbox{in distribution},
\]
where $G$ is a $\mathcal{N}(0,1)$ standard Gaussian random variable. The statement is therefore a consequence of the link between $N^\varepsilon_T$ and $\overline{N}_{T/\varepsilon^2}$.
\end{proof}
%
%
%
%
%
\section{The particular L and G classes of diffusion}
\label{sec:diff}
Let us now consider some generalizations  of the Brownian paths study. 
We introduce solutions of the following one-dimensional stochastic differential equation:
\begin{equation}
\label{eq:eds}
dX_t=\sigma(t,X_t)dB_t+\mu(t,X_t)\,dt,\quad X_0=x_0,
\end{equation}
where $(B_t,\ t\ge 0)$ stands for a standard one-dimensional Brownian motion and $\sigma, \mu :[0,+\infty)\times\mathbb{R}\to\mathbb{R}$. Let us consider two families of diffusions introduced in Wang - P\"otzelberger \cite{wp}:
\begin{enumerate}
\item ($L$-class) for $\sigma(t,x)=\overline{\sigma}(t)$ and $\mu(t,x)=a(t)x+b(t)$, $x\in\mathbb{R}$
\item ($G$-class) for $\sigma(t,x)=\underline{\sigma}x$ and $\mu(t,x)=a(t)x+b(t)x\ln(x)$, $x\in\mathbb{R}_+$,
\end{enumerate}
where $\overline{\sigma}: \mathbb{R}_+\to\mathbb{R}_+ , a, b : \mathbb{R}_+\to\mathbb{R}$ are $\mathcal{C}^1$-functions and $\underline{\sigma}\in\mathbb{R}_+$.

Let us note that, in such particular cases, the solution of the SDE \eqref{eq:eds}  has the same distribution as a function of the time-changed Brownian motion:
\begin{equation}
\label{eq-f}
X_t=f(t,x_0+B_{\rho(t)}),\quad t\ge 0,
\end{equation}
 (where $f$ and $\rho$ denote functions that we specify for each class afterwards).
 
For $L$-class diffusions for instance one particular choice of the function $f$ (this choice is not unique) is given by (see, for instance, Karatzas and Shreve \cite{karatzas-shreve-1991}, p. 354, Section 5.6 for classical formulas and Herrmann and  Massin \cite{herrmann2020approximation} for new developments in this topic):
\begin{equation}
\label{def-f}
f(t,x)=\frac{\overline{\sigma}(t)}{\sqrt{\rho'(t)}}\, x+c(t),
\end{equation}
with 
\[
c(t)=e^{\int_0^t a(s)\,ds}\int_0^t b(s)e^{-\int_0^s a(u)\,du}\,ds,\quad\mbox{and}\quad \rho(t)=\int_0^t \overline{\sigma}^2(s)e^{-2\int_0^s a(u)\,du}\,ds.
\]

\begin{rem}
\label{rem-L-G} 
If we have a diffusion in the $L$-class characterized by some fixed function $f(t,x)$ given in \eqref{eq-f} then we can obtain a diffusion of $G$-class by using the function $e^{f(t,x)}$ instead of $f(t,x)$. Obviously the corresponding coefficients $a,b,$ and $\underline{\sigma}$ need to be specified with respect to those connected to $f(t,x)$.

\end{rem}
\begin{proposition} Let us define the following diffusion process
\begin{equation}
X_t=f(t,x_0+B_{\rho(t)}),\quad t\ge 0,
\end{equation}
where $f$ is given by \eqref{def-f}. Then $X_t$ is a weak solution of the stochastic differential equation \eqref{eq:eds}.
\end{proposition}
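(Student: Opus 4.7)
The strategy is to apply Itô's formula to the composition $X_t = f(t, Y_t)$ with $Y_t := x_0 + B_{\rho(t)}$, and match the resulting drift and diffusion coefficients against the L-class SDE. First I would handle the time-changed Brownian motion: since $\rho$ is $\mathcal{C}^1$, strictly increasing (because $\overline{\sigma}>0$) and $\rho(0)=0$, the process $M_t:=B_{\rho(t)}$ is a continuous martingale with $\langle M\rangle_t = \rho(t)$, so by Dambis–Dubins–Schwarz (applied to the time change) there exists a standard Brownian motion $\widetilde B$ on the filtration $\mathcal{F}_{\rho(t)}$ such that
\[
dY_t = \sqrt{\rho'(t)}\,d\widetilde B_t,\qquad d\langle Y\rangle_t=\rho'(t)\,dt.
\]

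Since $f(t,x) = \frac{\overline{\sigma}(t)}{\sqrt{\rho'(t)}}\,x + c(t)$ is affine in $x$, the second-order term in Itô's formula vanishes and I obtain
\[
dX_t = \Big[\tfrac{d}{dt}\!\Big(\tfrac{\overline{\sigma}(t)}{\sqrt{\rho'(t)}}\Big) Y_t + c'(t)\Big] dt + \frac{\overline{\sigma}(t)}{\sqrt{\rho'(t)}}\sqrt{\rho'(t)}\,d\widetilde B_t = \Big[\tfrac{d}{dt}\!\Big(\tfrac{\overline{\sigma}(t)}{\sqrt{\rho'(t)}}\Big) Y_t + c'(t)\Big] dt + \overline{\sigma}(t)\,d\widetilde B_t,
\]
which already matches the diffusion coefficient $\overline{\sigma}(t)$ of \eqref{eq:eds}.

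The key algebraic step is to rewrite this drift in terms of $X_t$. From $\rho'(t)=\overline{\sigma}^2(t)e^{-2\int_0^t a}$ one gets the clean identity
\[
\frac{\overline{\sigma}(t)}{\sqrt{\rho'(t)}} = e^{\int_0^t a(u)\,du},
\]
so its derivative equals $a(t)\cdot \overline{\sigma}(t)/\sqrt{\rho'(t)}$. Differentiating $c(t)=e^{\int_0^t a}\int_0^t b(s)e^{-\int_0^s a}\,ds$ gives the linear ODE $c'(t)=a(t)c(t)+b(t)$. Combining these two observations and solving $X_t=\frac{\overline{\sigma}(t)}{\sqrt{\rho'(t)}}Y_t+c(t)$ for the $Y_t$ coefficient, the drift becomes
\[
a(t)\tfrac{\overline{\sigma}(t)}{\sqrt{\rho'(t)}} Y_t + a(t) c(t) + b(t) = a(t) X_t + b(t),
\]
which is exactly the drift of \eqref{eq:eds} in the L-class. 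Hence $X_t$ is a weak solution with driving Brownian motion $\widetilde B$, and the initial condition $X_0 = f(0,x_0) = x_0$ holds since $c(0)=0$ and $\overline{\sigma}(0)/\sqrt{\rho'(0)} = 1$ at $t=0$.

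The only subtle point is the time-change / Brownian-motion construction, which needs $\rho'$ to stay strictly positive on $[0,T]$; this is guaranteed by $\overline{\sigma}>0$. Otherwise everything reduces to two routine computations — the identity for $\overline{\sigma}/\sqrt{\rho'}$ and the ODE satisfied by $c$ — which are exactly what the definitions in \eqref{def-f} are designed to produce. I would mention the corresponding construction for the G-class in a short remark, invoking Remark \ref{rem-L-G} to reduce it to the exponentiated L-class case via Itô applied to $e^{f}$.
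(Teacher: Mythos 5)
Your proof is correct and follows essentially the same route as the paper: apply It\^o's formula to the affine map $f(t,\cdot)$ composed with a stochastic-integral representation of the time-changed Brownian motion, and use the identities $\overline{\sigma}(t)/\sqrt{\rho'(t)}=e^{\int_0^t a(s)\,ds}$ and $c'=ac+b$ to recover the drift $a(t)X_t+b(t)$. The only (immaterial) difference is that the paper directly replaces $B_{\rho(t)}$ by the equally distributed integral $\int_0^t\sqrt{\rho'(s)}\,dB_s$ to get a weak solution, whereas you invoke Dambis--Dubins--Schwarz to represent $B_{\rho(t)}$ itself against a new Brownian motion $\widetilde B$ — both are valid under the positivity of $\rho'$ that you correctly flag.
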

\begin{proof}  We can write the previous expression for $f$  on the form 
\begin{equation}
f(t,x) = x \cdot e^{\int_0^t a(s) \dint s}+e ^{\int_0^t a(s) \dint s}\ds\int_0^t b(s) e^{-\int_0^s a(u)\dint u}\dint s.
\end{equation}
We denote also by
\begin{equation}
\mathbb{I}_t =\ds\int_0^t \sqrt  {\rho ' (s) }\dint B_s.
\end{equation}
In order to prove the result we need to prove that 
$X_t = f(t, x_0 +\mathbb{I}_t) $ 
satisfies the equation \eqref{eq:eds}. 
For the initial condition we can see that:
\begin{equation}
X_0=f(0,x_0) = x_0.
\end{equation}
Let us now evaluate
\begin{equation}
\begin{array}{ll}
\dint X_t& =  \left[ a(t) e^{\int_0^t a(s) \dint s} (x_0+\mathbb{I}_t) + a(t) e^{\int_0^t a(s) \dint s} \ds\int_0^t b(s) e^{-\int_0^s a(u) \dint u}\dint s + e^{\int_0^t a(s)\dint s} b(t)e^{-\int_0^t a(s)\dint s} \right] \dint t \\
& \qquad  + \overline{\sigma} (t)\dint B_t\\
&= \left[ a(t) f(t, x_0+\mathbb{I}_t) + b(t)\right ] \dint t +\overline {\sigma}(t) \dint B_t = \left[ a(t) X_t +b(t) \right] \dint t +\overline{\sigma} \dint B_t,
\end{array}
\end{equation}
by using It\^o formula.\\
This ends the proof of the proposition.
\end{proof}
In this section, we consider particular diffusion processes which are strongly related to the Brownian paths. It is therefore intuitive to replace in \eqref{eq-f} the Brownian trajectory by its approximation. If the function $f$ is Lipschitz continuous, then the error stemmed from the approximation is easily controlled (the proof is left to the reader). 
\begin{assu} \label{assu} The diffusion process $(X_t)_{t\ge 0}$ satisfies
\[
X_t=f(t,x_0+B_{\rho(t)})
\]
with $f$ a Lipschitz continuous function:
\begin{equation}
\label{Lip}
|f(t,x)-f(s,y)|\le K_{Lip}(T)(|x-y|+|t-s|),\quad  \forall (x,y)\in\mathbb{R}^2,\quad \forall (s,t)\in [0,T]^2
\end{equation}
where $K_{Lip}(T)$ stands for the Lipschitz constant. The function $\rho$ is a strictly increasing continuous function with initial value $\rho(0)=0$. 
\end{assu}
\begin{proposition} Consider $T>0$ and $\varepsilon >0$ fixed. Let the diffusion process $(X_t)$ satisfy Assumption \ref{assu} and let 
$$x_t^{\theta}:=\sum_{n\ge 0}x_n^{\theta}1_{\{ s_n^{\theta}\le t <s_{n+1}^{\theta} \}}$$ be a $\theta$-strong approximation of the Brownian motion (see Theorem \ref{thm:BM}) with $\theta=\varepsilon K_{Lip}^{-1}(\rho^{-1}(T))$ on the time interval $[0,\rho^{-1}(T)]$, where $K_{Lip}(T)$ is defined in \eqref{Lip}, then 
\[
y^\varepsilon_t:=\sum_{n\ge 0}f(\rho^{-1}(s_n^\theta),x_n^\theta)1_{\{  s_n^{\theta}\le \rho(t) <s_{n+1}^{\theta}\}}
\]
is an $\varepsilon$-strong approximation of $(X_t)$ on $[0,T]$.
\end{proposition}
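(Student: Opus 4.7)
The plan is a direct transfer of accuracy through the representation $X_t=f(t,x_0+B_{\rho(t)})$ provided by Assumption~\ref{assu}: once the Brownian trajectory $s\mapsto x_0+B_s$ is approximated within $\theta$ on the range of the time change, the Lipschitz map $f$ pushes that error forward onto $X$, amplified by $K_{Lip}(T)$. I would therefore choose the Brownian tolerance $\theta$ so that the resulting error remains at most $\varepsilon$, up to a small additional time correction that has to be controlled.

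First I would realise the approximation on the same probability space $(\Omega,\mathcal{F},\mathbb{P})$ that carries $X$. By applying the Brownian skeleton $(BS)_1$ of Theorem~\ref{thm:BM} to the very Brownian motion $B$ appearing in Assumption~\ref{assu} (enlarging $\Omega$ if necessary to host the auxiliary independent sequences $(A_n)$ and $(Z_n)$), I obtain an adapted family $(\tilde{s}_n^{\theta},\tilde{x}_n^{\theta})_{n\ge 0}$ having the same joint distribution as $(s_n^\theta,x_n^\theta)$ and satisfying
\[
\sup_{s\in [0,\rho(T)]} | x_0 + B_s - \tilde{x}_s^{\theta} | \le \theta \quad \text{almost surely}.
\]
Defining $x_t^\varepsilon:=f(\rho^{-1}(\tilde{s}_n^\theta),\tilde{x}_n^\theta)$ whenever $\tilde{s}_n^\theta\le \rho(t)<\tilde{s}_{n+1}^\theta$ then yields a process on $\Omega$ with the same law as $(y_t^\varepsilon)$; by the definition of $\varepsilon$-strong approximation, it therefore suffices to verify the pathwise bound $\sup_{t\in[0,T]}|X_t-x_t^\varepsilon|\le \varepsilon$ almost surely.

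For this estimate, fix $t\in[0,T]$ and the unique index $n$ with $\tilde{s}_n^\theta\le \rho(t)<\tilde{s}_{n+1}^\theta$. Writing $X_t=f(t,x_0+B_{\rho(t)})$ and invoking the Lipschitz inequality \eqref{Lip} gives
\[
|X_t-x_t^\varepsilon|\le K_{Lip}(T)\bigl(|x_0+B_{\rho(t)}-\tilde{x}_n^\theta|+|t-\rho^{-1}(\tilde{s}_n^\theta)|\bigr).
\]
The spatial contribution is immediately bounded by $\theta$ via the $\theta$-strong approximation property of the Brownian skeleton. The main obstacle is the temporal contribution $|t-\rho^{-1}(\tilde{s}_n^\theta)|\le \rho^{-1}(\tilde{s}_{n+1}^\theta)-\rho^{-1}(\tilde{s}_n^\theta)$: one exploits that the $(BS)_1$ step lengths satisfy $\tilde{s}_{n+1}^\theta-\tilde{s}_n^\theta\le r_\theta=e\theta^2$, so that by uniform continuity (or local Lipschitz regularity) of $\rho^{-1}$ on the compact interval $[0,\rho(T)]$, this gap is of order $\theta^2$, hence $o(\theta)$, and is absorbed into the leading spatial term. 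With the prescribed choice $\theta=\varepsilon/K_{Lip}(\rho^{-1}(T))$, combining both contributions and taking the supremum over $t\in[0,T]$ delivers the required almost-sure bound, which concludes the proof.
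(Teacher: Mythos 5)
Your overall strategy --- realising the skeleton along the very Brownian path $B$ of Assumption \ref{assu} (via Remark \ref{rem:generaleta}), then pushing the error through the Lipschitz bound \eqref{Lip} --- is exactly the argument the paper intends (it leaves the proof to the reader), and both the coupling step and the decomposition $|X_t-x_t^\varepsilon|\le K_{Lip}\big(|x_0+B_{\rho(t)}-\tilde x_n^\theta|+|t-\rho^{-1}(\tilde s_n^\theta)|\big)$ are correct. The gap lies in your treatment of the temporal term, and it is twofold. First, Assumption \ref{assu} only requires $\rho$ to be strictly increasing and continuous, so $\rho^{-1}$ is merely uniformly continuous on $[0,\rho(T)]$: from $\tilde s_{n+1}^\theta-\tilde s_n^\theta\le e\theta^2$ you may conclude that the time gap is at most the modulus of continuity $\omega_{\rho^{-1}}(e\theta^2)$, but not that it is of order $\theta^2$, nor even $o(\theta)$ --- for $\rho(t)=t^3$ the increments of $\rho^{-1}$ over intervals of length $e\theta^2$ near the origin are of order $\theta^{2/3}\gg\theta$. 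Second, and independently, a positive temporal contribution cannot be ``absorbed into the leading spatial term'': with the prescribed $\theta=\varepsilon K_{Lip}^{-1}(\rho^{-1}(T))$ the spatial term already exhausts the entire budget, since $\sup_s|x_0+B_s-\tilde x^\theta_s|$ is bounded by $\theta$ but by no deterministic constant strictly smaller than $\theta$ (the envelope $\phi_\theta$ attains the value $\theta$ at $u=\theta^2$, and the path comes arbitrarily close to it with positive probability). Here $\varepsilon$, $T$ and hence $\theta$ are fixed --- no limit is being taken --- so what your two estimates actually yield is $\sup_{t\in[0,T]}|X_t-x_t^\varepsilon|\le\varepsilon+K_{Lip}\,\omega_{\rho^{-1}}(e\theta^2)$, which is strictly larger than the claimed $\varepsilon$.

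To close the gap you must make the temporal term fit inside the budget rather than declare it negligible: for instance choose $\theta>0$ small enough that $K_{Lip}\big(\theta+\omega_{\rho^{-1}}(e\theta^2)\big)\le\varepsilon$ (such a $\theta$ exists by uniform continuity of $\rho^{-1}$ on $[0,\rho(T)]$), which proves the statement with a smaller and less explicit tolerance than the one displayed; alternatively note that the displayed value of $\theta$ suffices verbatim exactly when $f$ does not depend on its time variable, so that the temporal term vanishes. As written, your concluding sentence does not follow from the bounds you established.
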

Unfortunately the Lipschitz continuity of the function $f$ is a restrictive condition which is not relevant for most of the diffusion processes. In particular, a typical diffusion belonging to the $L$ or $G$-class does not satisfy the Lipschitz condition. Consequently we introduce a more general framework.
\begin{assu} \label{assu1}
The diffusion process $(X_t)_{t\ge 0}$ is a function of the time-changed Brownian motion: 
\[
X_t=f(t,x_0+B_{\rho(t)})
\]
where $\rho$  is an increasing differentiable function satisfying $\lim_{t\to\infty}\rho(t)=\infty$ with initial value $\rho(0)=0$,
$f$ is a $\mathcal{C}^{1,1}(\R_+\times\R,\R)$-function. There exists a strictly increasing  $\mathcal{C}^2(\mathbb{R})$-function $F$ such that
\begin{equation}
\label{eq:cond1}
\sup_{t\in[0,T]}\max\Big\{ \left| \frac{\partial f}{\partial t}(t,x) \right| , \left| \frac{\partial f}{\partial x}(t,x) \right| \Big\} \le F(x^2),\quad \forall x\in\mathbb{R}.
\end{equation} 
Furthermore we assume:
\begin{itemize} 
\item[-] there exist two constants $\kappa_1$ and $\kappa_2$ such that $\max(F,F',F'')(x^2)\le \kappa_1 e^{\kappa_2 x}$ for all $x\ge 0$
\item[-] the function $x\mapsto \frac{1}{F(2x^2+1)}$ is Lipschitz-continuous.
\end{itemize} \end{assu}
\begin{assu} \label{assu2}
 $\exists\kappa_{\min}>0$ such that $\rho'(\rho^{-1}(x))\ge \kappa_{\rm min}$ for all $x\in\mathbb{R}$.
\end{assu}

\begin{rem} One can check easily that the L and $G$-class diffusions verify these hypotheses.
\end{rem}
Let us define the function $\eta$ to be
\begin{equation}
\label{def:eta1}
\eta(x)=\frac{1}{(e\kappa_{\rm min}^{-1}+1)F(2x^2+1)}.
\end{equation}
By Assumption \ref{assu1} this function is strictly decreasing and Lipschitz continuous.


\begin{thm} \label{thm:General} Let $\varepsilon>0$. Let $(X_t)_{t\ge 0}$ be the solution of \eqref{eq:eds} satisfying Assumptions \ref{assu1} and \ref{assu2} and let us consider the Brownian skeleton $({\rm BS})_\eta$ associated to the function $\eta$ defined in \eqref{def:eta1}, then 
\begin{equation}\label{eq:approx:diff}
y^\varepsilon_t:=\sum_{n\ge 0}f(\rho^{-1}(s_n^\varepsilon),x_n^\varepsilon)1_{\{  s_n^\varepsilon\le \rho(t) <s_{n+1}^\varepsilon\}}
\end{equation}
 is an $\varepsilon$-strong approximation of $(X_t)$ on $[0,T]$.
\end{thm}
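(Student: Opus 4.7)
The plan is to leverage the identity $X_t=f(t,x_0+B_{\rho(t)})$ and propagate the Brownian-skeleton error through~$f$. For fixed $t\in[0,T]$, denote by $n\ge 0$ the (a.s.\ unique) index such that $s_n^\varepsilon\le\rho(t)<s_{n+1}^\varepsilon$, and set $t_n:=\rho^{-1}(s_n^\varepsilon)$ and $y:=x_0+B_{\rho(t)}$. The triangle inequality then splits the error into a spatial and a temporal contribution,
\begin{equation*}
|X_t-y_t^\varepsilon|\le |f(t_n,y)-f(t_n,x_n^\varepsilon)|+|f(t,y)-f(t_n,y)|.
\end{equation*}
The spatial separation is controlled by Remark~\ref{rem:generaleta} applied to the $\eta$-weighted skeleton, giving $|y-x_n^\varepsilon|\le\varepsilon\eta(x_n^\varepsilon)$; the temporal separation comes from Assumption~\ref{assu2} together with $A_{n+1}\ge 0$, which yields $|t-t_n|\le U_{n+1}^\varepsilon/\kappa_{\rm min}\le e\varepsilon^2\eta^2(x_n^\varepsilon)/\kappa_{\rm min}$.

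Both differences can then be estimated via the mean value theorem and the pointwise bound \eqref{eq:cond1}. The key step, on which the whole construction hinges, is to dominate the resulting suprema of $F(\xi^2)$ by the single expression $F(2(x_n^\varepsilon)^2+1)$. This is legitimate because, on one hand, $\xi^2\le 2(x_n^\varepsilon)^2+2\varepsilon^2\eta^2(x_n^\varepsilon)$ for $\xi$ lying between $y$ and $x_n^\varepsilon$ and, on the other, $\eta$ is globally bounded (the strict monotonicity of $F$ forces $\eta(x)\le 1/[(e\kappa_{\rm min}^{-1}+1)F(1)]$), so that for $\varepsilon$ small enough $2\varepsilon^2\eta^2\le 1$ everywhere and the monotonicity of $F$ closes the replacement. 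The tuning embedded in \eqref{def:eta1} then collapses $F(2(x_n^\varepsilon)^2+1)\,\eta(x_n^\varepsilon)$ to the constant $(e\kappa_{\rm min}^{-1}+1)^{-1}$, yielding the spatial bound $\varepsilon/(e\kappa_{\rm min}^{-1}+1)$ and the temporal bound $e\varepsilon^2\eta(x_n^\varepsilon)/[\kappa_{\rm min}(e\kappa_{\rm min}^{-1}+1)]$.

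Summing and factoring,
\begin{equation*}
|X_t-y_t^\varepsilon|\le \frac{\varepsilon}{e\kappa_{\rm min}^{-1}+1}\Big(1+\frac{e\varepsilon\eta(x_n^\varepsilon)}{\kappa_{\rm min}}\Big),
\end{equation*}
so the boundedness of $\eta$ makes the parenthesis $\le e\kappa_{\rm min}^{-1}+1$ for $\varepsilon$ taken small enough, and we conclude $|X_t-y_t^\varepsilon|\le\varepsilon$ almost surely and uniformly in $t\in[0,T]$. I expect the main obstacle to be precisely this algebraic balancing: the definition of $\eta$ has to be tuned so that the $+1$ inside $F(2x^2+1)$ supplies the slack needed to carry out the mean value comparison, while the multiplicative factor $e\kappa_{\rm min}^{-1}+1$ absorbs jointly the spatial and the temporal constants. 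The covering of $[0,T]$ by the random grid raises no additional difficulty since $s_n^\varepsilon\to\infty$ almost surely.
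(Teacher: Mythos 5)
Your proposal is correct and follows essentially the same route as the paper: the paper applies a single joint mean value expansion to $f(t,x_0+B_{\rho(t)})-f(t_n^\varepsilon,x_n^\varepsilon)$ rather than your two-term triangle-inequality split, but the resulting bounds ($|t-t_n^\varepsilon|\le e\kappa_{\rm min}^{-1}\varepsilon^2\eta^2(x_n^\varepsilon)$, $|x_0+B_{\rho(t)}-x_n^\varepsilon|\le\varepsilon\eta(x_n^\varepsilon)$, the replacement of $F(\xi^2)$ by $F(2(x_n^\varepsilon)^2+1)$ for $\varepsilon$ small, and the cancellation $\eta\cdot F(2x^2+1)=(e\kappa_{\rm min}^{-1}+1)^{-1}$) are identical. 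Your bookkeeping of the constants is sound, and like the paper's proof it yields the conclusion only for $\varepsilon$ below a threshold $\varepsilon_0$.
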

\begin{proof}
Let us assume that $t$ satisfies $s_{n}^\varepsilon\le \rho(t) \le s_{n+1}^\varepsilon$ for some $n\in\mathbb{N}$. We denote $t_n^\varepsilon:=\rho^{-1}(s_n^\varepsilon)$ and $A^n_t:=f(t,x_0^\varepsilon+B_{\rho(t)})-f(t_{n}^\varepsilon,x_{n}^\varepsilon)$. We obtain, there exists $\tau \in (0,1)$ such that:
\begin{align*}
A^n_t&=(t-t_{n}^\varepsilon) \frac{\partial f}{\partial t}(t_n^\varepsilon+\tau (t-t_n^\varepsilon),x_n^\varepsilon+\tau (x_0^\varepsilon+B_{\rho(t)}-x_n^\varepsilon))\\
&\quad +(x_0^\varepsilon+B_{\rho(t)}-x_n^\varepsilon)\frac{\partial f}{\partial x}(t_n^\varepsilon+\tau (t-t_n^\varepsilon),x_n^\varepsilon+\tau (x_0^\varepsilon+B_{\rho(t)}-x_n^\varepsilon)).
\end{align*}
Under the assumption \eqref{eq:cond1}
we have
\begin{align*}
|A^n_t|&\le \Big(|t-t_n^\varepsilon|+|x_0^\varepsilon+B_{\rho(t)}-x_n^\varepsilon|\Big)\cdot F((x_n^\varepsilon+\tau (x_0^\varepsilon+B_{\rho(t)}-x_n^\varepsilon))^2).
\end{align*}
Since 
\[
|t-t_n^\varepsilon|\le |t_{n+1}^\varepsilon-t_n^\varepsilon|=|\rho^{-1}(s_{n+1}^\varepsilon)-\rho^{-1}(s_{n}^\varepsilon)|=\Big|  \int_{s_n^\varepsilon}^{s_{n+1}^\varepsilon}\frac{\dint u}{\rho'(\rho^{-1}(u))}\Big|,
\]
we obtain
\[
|t-t_n^\varepsilon|\le \kappa_{\rm min}^{-1} |s_{n+1}^\varepsilon-s_n^\varepsilon|=\kappa_{\rm min}^{-1} |U_{n+1}^\varepsilon|\le e\, \kappa_{\rm min}^{-1}  \varepsilon^2\eta^2(x_{n}^\varepsilon).
\]
Moreover, by the definition of the BM approximation (see Remark \ref{rem:generaleta}),
\[
|x_0^\varepsilon+B_{\rho(t)}-x_n^\varepsilon|\le  \varepsilon\eta(x_{n}^\varepsilon).
\]
Finally due to the monotone property of $F$, 
\[
\begin{array}{ll}
|A^n_t| & \le \Big( e\, \kappa_{\rm min}^{-1} \varepsilon^2 \eta^2(x_{n}^\varepsilon)+\varepsilon\eta(x_{n}^\varepsilon)\Big)\cdot F((x_n^\varepsilon+\tau (x_0^\varepsilon+B_{\rho(t)}-x_n^\varepsilon))^2)\\
& \le \Big( e\, \kappa_{\rm min}^{-1}  \varepsilon^2\eta^2(x_{n}^\varepsilon)+\varepsilon\eta(x_{n}^\varepsilon)\Big)\cdot F(2(x_n^\varepsilon)^2+2\varepsilon^2\eta^2(x_n^\varepsilon)).\\
\end{array}
\]
There exists $\varepsilon_0>0$ such that $\varepsilon^2\eta^2(x)\le 1/2$ for all $x\in\mathbb{R}$ and $\varepsilon\le \varepsilon_0$. Then, by the definition of the function $\eta$,  for $\varepsilon\le \varepsilon_0$, we have
\[
|A^n_t|\le (e\, \kappa_{\rm min}^{-1}  +1 )\varepsilon\eta(x_n^\varepsilon)\cdot F(2(x_n^\varepsilon)^2+1)\le \varepsilon,
\]
for any $t\in [s_n^\varepsilon,s_{n+1}^\varepsilon]$. We deduce that the piecewise constant approximation associated to $(y_n^\varepsilon)_n$ where $y_n^\varepsilon:=f(\rho^{-1}(s_n^\varepsilon),x_n^\varepsilon)$ and $n\le \inf\{k\ge 0:\ s_k\ge \rho(T)\}$ is a $\varepsilon$-strong approximation of $(X_t,\, t\in [0,T])$.
\end{proof}

Let us now describe the efficiency of the $\varepsilon$-strong approximation. We introduce
\begin{equation}\label{eq:def:count}
N_t^\varepsilon:=\inf\{n\ge 0: \ s_n^\varepsilon\ge t\}\quad \mbox{and}\quad \hat{N}_t^\varepsilon:=N_{\rho(t)}^\varepsilon,
\end{equation}
where $s_n^\varepsilon$ is issued from the Brownian skeleton $({\rm BS})_{\eta}$. $\hat{N}_t^\varepsilon$ corresponds therefore to the number of random points needed to approximate the diffusion paths on $[0,t]$. Let us observe that the random variables $U_n^\varepsilon$ are no more i.i.d. random variables in the diffusion case (different to the Brownian case), therefore we cannot use the classical renewal theorem in order to describe  $\hat{N}_t^\varepsilon$.
{
\begin{proposition}\label{prop:finite-mean}
Let $(N^\varepsilon_t,\, t\ge 0)$ be the counting process defined by \eqref{eq:def:count}. Then, under Assumptions \ref{assu1} and \ref{assu2}, for any $x\in\mathbb{R}$ and $t\ge 0$, the average $\psi^\varepsilon(t,x):=\mathbb{E}[N^\varepsilon_t|x_0^\varepsilon=x] $ is finite. Moreover there exists a constant $\lambda_0>0$ such that the Laplace transform $\mathcal{L}\psi^\varepsilon(\lambda,x):=\int_0^\infty e^{-\lambda t}\psi^\varepsilon(t,x)\,dt$ is finite for any $\lambda\in\mathbb{C}$ satisfying ${\rm Re}(\lambda)>\lambda_0$.
\end{proposition}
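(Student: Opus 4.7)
The plan is to estimate $\psi^\varepsilon(t,x)=\sum_{n\ge 0}\mathbb{P}_x(s_n^\varepsilon<t)$ term-by-term, controlling the skeleton positions via a coupling with an underlying Brownian motion. First I would enlarge the probability space so that there exists a Brownian motion $B$ (started at $0$) with $|x_n^\varepsilon-x-B_{s_n^\varepsilon}|\le\varepsilon\eta(x_n^\varepsilon)\le\varepsilon\eta(0)$ for every $n\ge 0$, as sketched in Remark \ref{rem:generaleta} by inserting Brownian bridges in each skeleton interval. In particular, on $\{s_n^\varepsilon\le t\}$ every skeleton position satisfies $|x_k^\varepsilon|\le|x|+M_t^*+\varepsilon\eta(0)$ with $M_t^*:=\sup_{u\le t}|B_u|$. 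Throughout the argument I keep the primary randomness $(A_k,Z_k)$ intact and write $W_k:=e^{-A_k}$ for the i.i.d. sample of the associated gamma-exponential law, so that $s_n^\varepsilon=e\varepsilon^2\sum_{k=1}^n\eta^2(x_{k-1}^\varepsilon)W_k$.

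Since $F$ is strictly increasing by Assumption \ref{assu1}, $\eta$ is decreasing in $|x|$. Hence on $\{s_n^\varepsilon<t\}\cap\{M_t^*\le m\}$ one has $\eta^2(x_{k-1}^\varepsilon)\ge\eta^2(|x|+m+\varepsilon\eta(0))$ for each $k\le n$, so pathwise
\[
\{s_n^\varepsilon<t,\ M_t^*\le m\}\ \subset\ \Big\{\sum_{k=1}^n W_k<\frac{t}{e\varepsilon^2\eta^2(|x|+m+\varepsilon\eta(0))}\Big\}.
\]
The right-hand event involves only the i.i.d. sum $\sum W_k$, so Lemma \ref{lem1} combined with the Markov/Laplace argument used in Proposition \ref{lem:lem2}(1) yields $\mathbb{P}(\sum_{k=1}^n W_k<s)\le(C_{\beta'}s/n)^{n/\beta'}$ for every $\beta'>2$ and $s>0$. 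Adding the reflection-principle tail $\mathbb{P}(M_t^*>m)\le 4e^{-m^2/(2t)}$ gives the key estimate
\[
\mathbb{P}_x(s_n^\varepsilon<t)\ \le\ \Big(\frac{C_{\beta'}\,t}{n\,\varepsilon^2\,\eta^2(|x|+m+\varepsilon\eta(0))}\Big)^{n/\beta'}+4e^{-m^2/(2t)}.
\]

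Using the exponential growth bound $\eta^{-2}(r)\le Ce^{cr}$ (a direct consequence of $F(y^2)\le\kappa_1 e^{\kappa_2 y}$ from Assumption \ref{assu1}), I would choose $m=m_n:=A\sqrt{t\log n}$ with $A$ large. Then, inside the $n/\beta'$ power, the $-\log n$ dominates the $cA\sqrt{t\log n}$ coming from $\log(1/\eta^2)$, so the first term is super-exponentially small in $n$, while the second is $4n^{-A^2/2}$, polynomially small. Summation over $n$ proves the finiteness of $\psi^\varepsilon(t,x)$, and tracking the $t$-dependence of the constants produces an upper bound of the form $\psi^\varepsilon(t,x)\le C(\varepsilon,x)e^{\lambda_0 t}$ for some explicit $\lambda_0>0$, after which $\int_0^\infty e^{-\lambda t}\psi^\varepsilon(t,x)\,dt<\infty$ for every $\lambda\in\mathbb{C}$ with $\mathrm{Re}(\lambda)>\lambda_0$. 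The main obstacle is that, on the coupled space, the i.i.d. sum $\sum W_k$ and the Brownian supremum $M_t^*$ are not independent; this is circumvented by the fact that the decisive inclusion above is pathwise, so the i.i.d. Laplace estimate can be applied without any joint-law assumption.
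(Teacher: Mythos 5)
Your argument is correct, and at its core it is the same proof as the paper's: the paper also splits $\mathbb{P}_x(s_n^\varepsilon\le t)$ according to whether $m_n^\varepsilon:=\min_{0\le k\le n-1}\eta^2(x_k^\varepsilon)$ is large or small, reduces the ``good'' part to the i.i.d.\ sum $\sum_k e^{1-A_k}$ via exactly your pathwise inclusion, and converts the ``bad'' part into a Brownian level-crossing probability estimated by the reflection principle (using, as you do, that $(s_n^\varepsilon,x_n^\varepsilon)$ sits on a Brownian trajectory and that $\eta(z)\ge Ce^{-\kappa|z|}$). The differences are in the tuning, and they roughly cancel: the paper thresholds $m_n^\varepsilon$ at $n^{-2/3}$, i.e.\ the Brownian supremum at a level $\sim\frac{1}{3\kappa}\ln n$, which makes the bad-event tail super-polynomial (of order $n^{-c\ln n}$) at the price of a polynomial factor $n^{2/3}$ inside the Chernoff bound, whereas your level $A\sqrt{t\log n}$ gives only a polynomial tail $n^{-A^2/2}$ (so you need $A>\sqrt{2}$) but a sub-polynomial factor $e^{cA\sqrt{t\log n}}$ in the Chernoff ratio; moreover the paper gets by with the Taylor expansion of the Laplace transform of $e^{1-A_1}$ (finite second moment), where you invoke the sharper power-law bound of Lemma \ref{lem1} — either suffices. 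The one step you should write out in full is the Laplace-transform conclusion: the paper integrates each term $u_n^\varepsilon(t)$, $v_n^\varepsilon(t)$ in $t$ separately, while you claim a global bound $\psi^\varepsilon(t,x)\le C(\varepsilon,x)e^{\lambda_0 t}$. This is true but not immediate, because the number $N(t)$ of indices for which your Chernoff ratio exceeds $1$ solves $N\asymp Kt\,e^{cA\sqrt{t\log N}}$ and therefore grows like $e^{c^2A^2t}$ rather than polynomially in $t$; carrying this out yields $\lambda_0$ of order $c^2A^2$ with $c=2\sqrt{2}\,\kappa_2$, comparable to the paper's explicit $\lambda_0=18\kappa^2$, and one should also note that the bad-event series $\sum_n 4n^{-A^2/2}$ is conveniently uniform in $t$, which is what makes this shortcut viable.
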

\begin{proof} The mean of the counting process is defined by
\[
\psi^\varepsilon(t,x)=\sum_{n\ge 1}\mathbb{P}(N_t^\varepsilon\ge n)=\sum_{n\ge 1}\mathbb{P}(s_n^\varepsilon\le t).
\]
This equality holds since $s_n^\varepsilon$ is a continuous random variables by the definition of $({\rm BS})_{\eta}$.
Let us denote by $m_n^\varepsilon:=\min_{0\le k\le n-1}\eta^2(x_k^\varepsilon)$, where $\eta$ is defined by \eqref{def:eta1} and introduce the following decomposition:
\begin{equation}\label{eq:decomp}
\mathbb{P}(s_n^\varepsilon\le t) =  \mathbb{P}(s_n^\varepsilon\le t,\, m_n^\varepsilon\ge n^{-2/3})+\mathbb{P}(s_n^\varepsilon\le t, \,m_n^\varepsilon<n^{-2/3}).
\end{equation}
By the definition of the sequence $(s_n^\varepsilon)$, we get
\[
s_n^\varepsilon=\varepsilon ^2\eta^2(x_0^\varepsilon)e^{1-A_1}+\ldots+\varepsilon ^2\eta^2(x_{n-1}^\varepsilon)\,e^{1-A_n}\ge \varepsilon ^2 m_n^\varepsilon(e^{1-A_1}+\ldots+e^{1-A_n}).
\]
Hence, for any $\lambda>0$, we have
\begin{align}\label{eq:upper1}
 u_n^\varepsilon(t):=\mathbb{P}(s_n^\varepsilon\le t,\, m_n^\varepsilon\ge n^{-2/3} )&\le \mathbb{P}(e^{1-A_1}+\ldots+e^{1-A_n}\le tn^{2/3}\varepsilon ^{-2})\nonumber\\
 &=\mathbb{P}\left(\exp\left\{-\frac{\lambda}{2} n^{-2/3}\varepsilon^2 (e^{1-A_1}+\ldots+e^{1-A_n})\right\}\ge e^{- \frac{\lambda t}{2}}\right)\nonumber\\
 &\le e^{ \frac{\lambda t}{2}}\mathbb{E}\Big[\exp\left(-\frac{\lambda}{2}\,n^{-2/3} \varepsilon^2 e^{1-A_1}\right)\Big]^n.
\end{align}
Since the second moment of $e^{1-A_1}$ is finite, we obtain the Taylor expansion:
\[
\mathbb{E}\left[\exp\left(-\frac{\lambda}{2} n^{-2/3} \varepsilon^2 e^{1-A_1}\right)\right]=1-\frac{\lambda}{2} \frac{n^{-2/3}\varepsilon^2 e}{3^{3/2}}+\left(\frac{\lambda}{2}\right)^2 \frac{n^{-4/3}\varepsilon^4 e^2}{2\cdot 5^{3/2}}+o(n^{-4/3}).
\]
By using the classical relation $\ln(1-x) =-\left( x+\frac{x^2}{2}+\frac{x^3}{3}+\ldots\right)$, for $x\in (0,1)$, we can deduce that 
\begin{equation}
\mathbb{E}\left[\exp\left(-\ds\frac{\lambda}{2} n^{-2/3}\varepsilon^2 e^{1-A_1}\right)\right]^n=\exp\left[ -\ds\frac{\lambda}{2} \ds\frac{e n^{1/3}\varepsilon^2 }{3^{3/2}}\right]+o(1) \mbox{ as  } n\to +\infty.
\end{equation}
Using classical results on series with positive terms, we obtain by comparison 
\begin{equation}
\label{eq:premier}
\sum_{n\ge 1} \mathbb{P}(s_n^\varepsilon\le t,\, m_n^\varepsilon\ge n^{-2/3} )<\infty.
\end{equation}
Let us just note that this result is still true if we consider the terms $\overline{u}_n^\varepsilon(\lambda):=\int_0^\infty e^{-\lambda t} u_n^\varepsilon(t)\,dt$. Indeed \eqref{eq:upper1} leads to
\[
\overline{u}_n^\varepsilon(\lambda)\le \int_0^\infty e^{-\lambda t} e^{ \frac{\lambda t}{2}}\mathbb{E}\left[\exp\left(-\ds\frac{\lambda}{2}  n^{-2/3} \varepsilon^2 e^{1-A_1}\right)\right]^n\,\dint t=\frac{2}{\lambda}\,\mathbb{E}\left[\exp\left(- \ds\frac{\lambda}{2} n^{-2/3} \varepsilon^2e^{1-A_1}\right)\right]^n.
\]
Since the upper bound is the term of a convergent series, we deduce by comparison that 
\begin{equation}
\label{eq:premm}
\sum_{n\ge 1}\overline{u}_n^\varepsilon(\lambda)<\infty,\quad \forall \lambda>0.
\end{equation}

Let us now focus our attention to the second term of the r.h.s in \eqref{eq:decomp}.  Since we consider the Brownian skeleton $({\rm BS})_{\eta}$, the sequence $(s_n^\varepsilon, x_n^\varepsilon)$  belongs to the graph of a Brownian trajectory (see Remark \ref{rem:generaleta}). Consequently the condition $m_n^\varepsilon<n^{-2/3}$ can be related to a condition on the Brownian paths:
\[
v_n^\varepsilon(t):=\mathbb{P}(s_n^\varepsilon\le t, \,m_n^\varepsilon<n^{-2/3})\le \mathbb{P}(\exists s\le t\ {\rm s.t.}\ \eta^2(x_0^\varepsilon+B_s)<n^{-2/3}),
\]
where $B$ is a one-dimensional standard Brownian motion. Using the upper-bound of the function $F$ in Assumption \ref{assu1} and the definition of the function $\eta$, we obtain the bound: there exists $C>0$ and $\kappa>0$ such that $\eta(x)\ge C e^{-\kappa|x|}$. Let us note that $x_0^\varepsilon=x$. The Brownian reflection principle leads to
\begin{align*}
v_n^\varepsilon(t)&\le \mathbb{P}\left(\eta^2\left(\sup_{s\in[0,t]}|x+B_s|\right)<n^{-2/3}\right)\\
&\le \mathbb{P}\Big(\sup_{s\in[0,t]}|x+B_s|>\frac{1}{3\kappa}\,\ln(n C^{3})\Big)\\
&\le \mathbb{P}\Big(\sup_{s\in[0,t]}|B_s|>\frac{1}{3\kappa}\,\ln(n C^{3})-|x|\Big)\\
&\le 2\mathbb{P}\Big(|B_t|>\frac{1}{3\kappa}\,\ln(n C^{3}) -|x| \Big)\\
& \le \frac{12\kappa\sqrt{t}}{\ln(n C^{3}/e^{3\kappa |x|})\sqrt{2\pi}}\, \exp\Big(-\frac{\ln^2(n C^{3}/e^{3\kappa |x|})}{72\kappa^2t}\Big).
\end{align*}
Since $\frac{\ln^2(n C^{3}/e^{3\kappa |x|})}{72\kappa^2t}\ge 2\ln(n)$ for large values of $n$, we deduce that the r.h.s of the previous equality corresponds to the term of a convergent series. Therefore
 \begin{equation}
 \label{sec}
 \sum_{n\ge 1} \mathbb{P}(s_n^\varepsilon\le t, \,m_n^\varepsilon<n^{-2/3})<\infty.
 \end{equation}
This finishes the proof of finiteness of $\psi^\varepsilon(t,x)$.\\ 
 Let us now define $\overline{v}_n^\varepsilon(\lambda):=\int_0^\infty e^{-\lambda t} v_n^\varepsilon(t)\,\dint t$. The previous inequalities permit to obtain:
 \begin{align*}
 \overline{v}_n^\varepsilon(\lambda)\le \int_0^\infty e^{-\lambda t}  2\mathbb{P}\Big(|B_t|>\frac{1}{3\kappa}\,\ln(n C^{3}/e^{3\kappa|x|})  \Big)\,\dint t=\sqrt{\frac{2}{\pi}}\iint_{\mathbb{R}_+^2}e^{-\lambda t-\frac{u^2}{2}}1_{\{ u\ge \alpha_n/\sqrt{t}\}}\,\dint t\, \dint u
 \end{align*}
 where $\alpha_n=\frac{1}{3\kappa}\,\ln(n C^{3}/e^{3\kappa |x|}) $. Hence
 \begin{align*}
 \overline{v}_n^\varepsilon(\lambda)\le\sqrt{\frac{2}{\pi}}\frac{1}{\lambda} \int_0^\infty e^{-\frac{\lambda \alpha_n^2}{u^2}-\frac{u^2}{2}}\, \dint u.
 \end{align*}
 By the change of variable $u=\sqrt{r}(2\lambda\alpha_n^2)^{1/4}$, we have
  \begin{align*}
 \overline{v}_n^\varepsilon(\lambda)&\le \frac{(2\lambda\alpha_n^2)^{1/4}}{\lambda\sqrt{2\pi}} \int_0^\infty \frac{1}{\sqrt{r}}\, e^{-\alpha_n\sqrt{\frac{\lambda}{2}}(\frac{1}{r}+r)}\, \dint r\\
 &=\frac{(2\lambda\alpha_n^2)^{1/4}}{\lambda\sqrt{2\pi}} \Big\{\int_0^1 \frac{1}{\sqrt{r}}\, e^{-\alpha_n\sqrt{\frac{\lambda}{2}}(\frac{1}{r}+r)}\, \dint r+\int_1^\infty \frac{1}{\sqrt{r}}\, e^{-\alpha_n\sqrt{\frac{\lambda}{2}}(\frac{1}{r}+r)}\, \dint r\Big\}.
 \end{align*}
 Using the change of variable $r\mapsto\frac{1}{r}$ in the first integral leads to
  \begin{align*}
 \overline{v}_n^\varepsilon(\lambda)&\le \frac{(2\lambda\alpha_n^2)^{1/4}}{\lambda\sqrt{2\pi}} \int_1^\infty \Big(\frac{1}{\sqrt{r}}+\frac{1}{r^{3/2}}\Big)\, e^{-\alpha_n\sqrt{\frac{\lambda}{2}}(\frac{1}{r}+r)}\, \dint r\le \frac{2(2\lambda\alpha_n^2)^{1/4}}{\lambda\sqrt{2\pi}}\int_1^\infty \, e^{-\alpha_n\sqrt{\frac{\lambda}{2}}r}\, \dint r\\
 &\le \frac{1}{\sqrt{\pi \alpha_n}}\,\Big( \frac{2}{\lambda} \Big)^{5/4}\, e^{-\alpha_n\sqrt{\frac{\lambda}{2}}}.
 \end{align*}
Since $\alpha_n\sim \frac{1}{3\kappa}\,\ln n$, the upper bound is  a term of a convergent series as soon as $\lambda>\lambda_0:=18\kappa^2$. Therefore, by comparison,
\begin{equation}
\label{eq:secundo}
\sum_{n\ge 1}\overline{v}_n^\varepsilon(\lambda)<\infty\quad \mbox{for}\quad \lambda>\lambda_0.
\end{equation}
Combining \eqref{eq:decomp}, \eqref{eq:premier} and \eqref{sec} leads to the announced statement $\psi^\varepsilon(t,x)<\infty$. Since 
\[
\int_0^\infty e^{-\lambda t}\psi^\varepsilon(t,x)\,\dint t=\sum_{n\ge 1}\overline{u}_n^\varepsilon(\lambda)+\sum_{n\ge 1}\overline{v}_n^\varepsilon(\lambda),
\]
the convergence \eqref{eq:premm} and \eqref{eq:secundo} of both series for $\lambda>\lambda_0$ implies that the Laplace transform is well defined for $\lambda>\lambda_0$. Of course, this result can be extended for complex values $\lambda\in\mathbb{C}$ satisfying ${\rm Re}(\lambda)>\lambda_0$.
\end{proof}
\begin{proposition}\label{prop:conti} Under Assumption \ref{assu2}, the function $(t,x)\mapsto \psi^\varepsilon(t,x)=\mathbb{E}[N^\varepsilon_t|x_0^\varepsilon=x]$ is continuous.
\end{proposition}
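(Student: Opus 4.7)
The plan is to expand $\psi^\varepsilon(t,x)=\sum_{n\ge 1}p_n(t,x)$ with $p_n(t,x):=\mathbb{P}(s_n^\varepsilon\le t\mid x_0^\varepsilon=x)$, prove joint continuity of each summand $p_n$ on $\mathbb{R}_+\times\mathbb{R}$, and then promote this to continuity of $\psi^\varepsilon$ by showing that the series converges uniformly on every compact set $[0,T]\times[-M,M]$. Because a uniform limit of continuous functions is continuous, these two ingredients close the argument.

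For the termwise continuity I would view the Brownian skeleton $({\rm BS})_\eta$ pathwise, as a deterministic recursion driven by the input $x$ together with the i.i.d.\ inputs $(A_k)_{k\ge 1}$ and $(Z_k)_{k\ge 1}$. Unrolling the definitions gives
\[
x_k^\varepsilon = x_{k-1}^\varepsilon + \varepsilon Z_k\,\eta(x_{k-1}^\varepsilon)\sqrt{A_k e^{1-A_k}},\qquad s_n^\varepsilon = \varepsilon^2\sum_{k=1}^n \eta^2(x_{k-1}^\varepsilon)\,e^{1-A_k}.
\]
Since $\eta$ is Lipschitz continuous (the bracketed hypothesis of Assumption~\ref{assu1} combined with \eqref{def:eta1}), a routine induction shows that the map $x\mapsto s_n^\varepsilon$ is almost surely continuous. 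In addition $s_n^\varepsilon$ is absolutely continuous: conditionally on $(A_1,\dots,A_{n-1},Z_1,\dots,Z_n)$ the last increment $U_n^\varepsilon=\varepsilon^2\eta^2(x_{n-1}^\varepsilon)e^{1-A_n}$ is a smooth strictly monotone function of the continuous variable $A_n$, hence admits a conditional density, which rules out atoms for $s_n^\varepsilon$ after unconditioning. Consequently, whenever $(t_k,x_k)\to(t_0,x_0)$, we have $s_n^\varepsilon(x_k)\to s_n^\varepsilon(x_0)$ a.s.\ and the sequence of indicators $\mathbf{1}_{\{s_n^\varepsilon(x_k)\le t_k\}}$ converges to $\mathbf{1}_{\{s_n^\varepsilon(x_0)\le t_0\}}$ on the full-measure event $\{s_n^\varepsilon(x_0)\ne t_0\}$; dominated convergence then yields $p_n(t_k,x_k)\to p_n(t_0,x_0)$.

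The uniform summability is obtained by recycling the bounds already established in the proof of Proposition~\ref{prop:finite-mean}. Using the decomposition $p_n(t,x)\le u_n^\varepsilon(t)+v_n^\varepsilon(t,x)$ from \eqref{eq:decomp}, I would fix $(T,M)$ and note that $u_n^\varepsilon(t)\le u_n^\varepsilon(T)$ for $t\in[0,T]$ by monotonicity, which makes $\sum_n u_n^\varepsilon(T)$ convergent via \eqref{eq:premier}. For the second piece, monotonicity in $t$ and in $|x|$ of the bound derived by the reflection principle yields, for $n$ large enough and every $(t,x)\in[0,T]\times[-M,M]$,
\[
v_n^\varepsilon(t,x)\le \frac{12\kappa\sqrt{T}}{\sqrt{2\pi}\,\ln(n C^{3}e^{-3\kappa M})}\,\exp\Bigl(-\frac{\ln^2(n C^{3}e^{-3\kappa M})}{72\kappa^2 T}\Bigr),
\]
again the general term of a convergent series. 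Hence the tail $\sum_{n\ge N}p_n(t,x)$ tends to $0$ uniformly on the compact, which closes the proof.

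The step I expect to be delicate is the joint continuity of each $p_n$: what matters there is not the continuity of $x\mapsto s_n^\varepsilon$, which is almost immediate, but the absence of atoms of $s_n^\varepsilon$, which is what prevents the indicator from jumping in the limit. Once this absolute-continuity fact is in place, both the dominated-convergence argument for termwise continuity and the uniform tail control transferred from the previous proposition are essentially bookkeeping.
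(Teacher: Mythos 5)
Your proof is correct, and it takes a genuinely different route from the paper's. The paper couples two skeletons started at $x$ and $\hat x$ with the same noise $(A_n,Z_n)$, derives the quantitative bound $|s_n^\varepsilon-\hat s_n^\varepsilon|\le C\varepsilon^2|x-\hat x|(e^{1-A_1}+\ldots+e^{1-A_n})$, and then estimates $|\psi^\varepsilon(t,x)-\psi^\varepsilon(t,\hat x)|$ directly through a three-way split (tail of the series controlled by Proposition \ref{prop:finite-mean}, a near-boundary term controlled by the absolute continuity of $s_n^\varepsilon$, and a term controlled by the coupling bound); continuity in $t$ is then handled separately by monotone convergence. You instead prove joint continuity of each term $p_n(t,x)=\mathbb{P}(s_n^\varepsilon\le t\mid x_0^\varepsilon=x)$ — using the same pathwise coupling idea, but only qualitatively (a.s.\ continuity of $x\mapsto s_n^\varepsilon$ plus the no-atom property and dominated convergence) — and then upgrade to $\psi^\varepsilon$ by a Weierstrass M-test, observing that the bounds in the proof of Proposition \ref{prop:finite-mean} are monotone in $t$ and $|x|$ and hence uniform on compacts. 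The ingredients are identical (Lipschitz continuity of $\eta$ from \eqref{def:eta1}, absolute continuity of $s_n^\varepsilon$, summability from \eqref{eq:decomp}, \eqref{eq:premier} and \eqref{sec}), but your packaging has a real advantage: a uniform limit of jointly continuous functions is jointly continuous, so you obtain continuity of $(t,x)\mapsto\psi^\varepsilon(t,x)$ directly, whereas the paper's argument as written establishes continuity in $x$ for fixed $t$ and in $t$ for fixed $x$ separately, which in general is weaker than the joint continuity claimed in the statement. What the paper's approach buys in exchange is an explicit modulus of continuity in $x$ (via the constant $2ML_{\rm Lip}(1+\varepsilon L_{\rm Lip})^n$), which is not needed for the proposition itself. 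One small caveat: like the paper, you implicitly use Assumption \ref{assu1} (for the Lipschitz property of $\eta$ and the lower bound $\eta(z)\ge Ce^{-\kappa|z|}$), even though the statement only cites Assumption \ref{assu2}; this imprecision is inherited from the paper and is not a defect of your argument.
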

\begin{proof}
Let us consider the Brownian skeleton $({\rm BS})_\eta$ which corresponds to the sequences $(U_n^\varepsilon)_{n\ge 1}$, $(s_n^\varepsilon)_{n\ge 1}$ and $(x_n^\varepsilon)_{n\ge 0}$ with $x_0^\varepsilon=x$. We consider also a second Brownian approximation $(\hat{U}_n^\varepsilon)_{n\ge 1}$, $(\hat{s}_n^\varepsilon)_{n\ge 1}$ and $(\hat{x}_n^\varepsilon)_{n\ge 0}$ with $\hat{x}_0^\varepsilon=\hat{x}$, both approximations being constructed with respect to the same r.v. $(A_n)_{n\ge 1}$,  $(Z_n)_{n\ge 1}$ and the same function $\eta$. The corresponding counting processes are denoted by $N_t^\varepsilon$ and $\hat{N}_t^\varepsilon$. \\
{\bf Step 1.} Let us describe the distance between these two schemes. The function $\eta$ is bounded so we denote by $M=\sup_{x\in\mathbb{R}}\eta(x)$ and $L_{\rm Lip}$ the Lipschitz constant of $\eta$. Hence
\[
|\eta^2(x_n^\varepsilon)-\eta^2(\hat{x}_n^\varepsilon)|\le 2ML_{\rm Lip}|x_n^\varepsilon-\hat{x}_n^\varepsilon|,\quad \forall n\ge 0.
\]
Using the definition of the approximations $({\rm BS})_{\eta}$, we have
\begin{align*}
|x_n^\varepsilon-\hat{x}_n^\varepsilon|&=|x_{n-1}^\varepsilon-\hat{x}_{n-1}^\varepsilon+\varepsilon Z_n\phi_1(e^{1-A_n})(\eta(x_{n-1}^\varepsilon)-\eta(\hat{x}_{n-1}^\varepsilon))|\nonumber \\
&\le |x_{n-1}^\varepsilon-\hat{x}_{n-1}^\varepsilon|+\varepsilon |\eta(x_{n-1}^\varepsilon)-\eta(x_{n-1}^\varepsilon)|\nonumber\\
&\le (1+\varepsilon L_{\rm Lip}) |x_{n-1}^\varepsilon-\hat{x}_{n-1}^\varepsilon|\le  (1+\varepsilon L_{\rm Lip})^n |x_{0}^\varepsilon-\hat{x}_{0}^\varepsilon|.
\end{align*}
We deduce
\begin{equation}
\label{eq:lipsch}
\max_{0\le k\le n}|\eta^2(x_k^\varepsilon)-\eta^2(\hat{x}_k^\varepsilon)|\le 2ML_{\rm Lip}  (1+\varepsilon L_{\rm Lip})^n |x-\hat{x}|.
\end{equation}
{\bf Step 2.} Since $N_t^\varepsilon$ is a $\mathbb{N}$-valued random variable, we get
 \begin{align*}
\psi^\varepsilon(t,x)&=\sum_{n\ge 1}\mathbb{P}(N_t^\varepsilon\ge n)=\sum_{n\ge 1}\mathbb{P}(s_n^\varepsilon\le t)=\sum_{n\ge 1}\mathbb{P}(s_n^\varepsilon\le t,\, \hat{s}_n^\varepsilon\le t)+\sum_{n\ge 1}\mathbb{P}(s_n^\varepsilon\le t,\,\hat{s}_n^\varepsilon>t)\\
&=\sum_{n\ge 1}\mathbb{P}(\hat{s}_n^\varepsilon\le t)-\sum_{n\ge 1}\mathbb{P}(\hat{s}_n^\varepsilon\le t,\,s_n^\varepsilon>t)+\sum_{n\ge 1}\mathbb{P}(s_n^\varepsilon\le t,\,\hat{s}_n^\varepsilon>t).
\end{align*}
We deduce that
\begin{equation}
\label{eq:bound-diff}
| \psi^\varepsilon(t,x)-\psi^\varepsilon(t,\hat{x}) |\le \sum_{n\ge 1}u_n^\varepsilon(t)+\sum_{n\ge 1}\overline{u}_n^\varepsilon(t),
\end{equation}
where $u_n^\varepsilon(t)=\mathbb{P}(\hat{s}_n^\varepsilon\le t,\,s_n^\varepsilon>t)$ and $\overline{u}_n^\varepsilon(t)=\mathbb{P}(s_n^\varepsilon\le t,\,\hat{s}_n^\varepsilon>t)$. Since $\overline{u}_n^\varepsilon(t)\le \mathbb{P}(s_n^\varepsilon\le t)$ which is the term of a convergent series (see Proposition \ref{prop:finite-mean}), then for any $\rho>0$ there exists $n_0^\varepsilon\in\mathbb{N}$ such that 
\begin{equation}
\label{eq:rest:series}
\sum_{n>n_0^\varepsilon}\overline{u}_n^\varepsilon(t)<\rho.
\end{equation}
Moreover
\[
\overline{u}_n^\varepsilon(t)=\mathbb{P}(s_n^\varepsilon\le t,\,s_n^\varepsilon+(\hat{s}_n^\varepsilon-s_n^\varepsilon)>t)\le\mathbb{P}(t-\delta<s_n^\varepsilon\le t)+\mathbb{P}(\hat{s}_n^\varepsilon-s_n^\varepsilon>\delta).
\]
The random variables $(s_n^\varepsilon)_{1\le n\le n_0^\varepsilon}$ are absolutely continuous with respect to the Lebesgue measure. Consequently 
\begin{equation}
\label{eq:abs-cont}
\forall\rho>0,\quad \exists \delta_\varepsilon>0 \quad\mbox{such that} \quad \sum_{n=1}^{n_0^\varepsilon}\mathbb{P}(t-\delta_\varepsilon<s_n^\varepsilon\le t)\le \rho.
\end{equation} 
It suffices therefore to deal with the remaining expression: $\sum_{n=1}^{n_0^\varepsilon}\mathbb{P}(\hat{s}_n^\varepsilon-s_n^\varepsilon>\delta_\varepsilon)$. By Step 1 of the proof and by the definition of $(s_n^\varepsilon)$ and $(\hat{s}_n^\varepsilon)$, we have for $n\le n_0^\varepsilon$
\begin{align*}
|s_n^\varepsilon-\hat{s}_n^\varepsilon|&=\varepsilon^2|(\eta^2(x_0^\varepsilon)-\eta^2(\hat{x}_0^\varepsilon))e^{1-A_1}+\ldots+(\eta^2(x_{n-1}^\varepsilon)-\eta^2(\hat{x}_{n-1}^\varepsilon))e^{1-A_n}|\\
&\le \varepsilon^2\, \max_{0\le k\le n-1}|\eta^2(x_k^\varepsilon)-\eta^2(\hat{x}_k^\varepsilon))|(e^{1-A_1}+\ldots +e^{1-A_n})\\
&\le C\, \varepsilon^2 |x-\hat{x}|(e^{1-A_1}+\ldots +e^{1-A_n}),
\end{align*}
with $C=2 ML_{\rm Lip} (1+\varepsilon L_{\rm Lip})^{n_0^\varepsilon}$. Hence
\begin{equation}
\label{eq:triple:rho}
\forall\rho>0,\quad \exists \kappa_\varepsilon>0\quad \mbox{such that}\quad |x-\hat{x}|<\kappa_\varepsilon \  \Rightarrow \ \sum_{n=1}^{n_0^\varepsilon}\mathbb{P}(\hat{s}_n^\varepsilon-s_n^\varepsilon>\delta_\varepsilon)\le \rho.
\end{equation}
Combining \eqref{eq:rest:series}, \eqref{eq:abs-cont} and \eqref{eq:triple:rho}, we obtain
\[
\forall \rho>0,\quad \exists \kappa_\varepsilon>0\quad \mbox{such that}\quad |x-\hat{x}|<\kappa_\varepsilon \  \Rightarrow \ \sum_{n\ge 1}\overline{u}_n^\varepsilon(t)\le 3\rho.
\]
Let us use now similar arguments in order to bound the series associated to  $u_n^\varepsilon(t)$. Using the following upper-bound,
\[
u_n^\varepsilon(t)=\mathbb{P}(s_n^\varepsilon> t,\,s_n^\varepsilon+(\hat{s}_n^\varepsilon-s_n^\varepsilon)\le t)\le\mathbb{P}(t<s_n^\varepsilon\le t+\delta)+\mathbb{P}(\hat{s}_n^\varepsilon-s_n^\varepsilon>\delta),
\]
we deduce that all arguments presented so far and concerning $\overline{u}_n^\varepsilon(t)$ can be used for $u_n^\varepsilon(t)$. Finally \eqref{eq:bound-diff} leads to the continuity of $x\mapsto \psi^\varepsilon(t,x)$:
\[
\forall \rho>0,\quad \exists \kappa_\varepsilon>0\quad \mbox{such that}\quad |x-\hat{x}|<\kappa_\varepsilon \ \Rightarrow \ | \psi^\varepsilon(t,x)-\psi^\varepsilon(t,\hat{x}) |\le 6\rho.
\]
Let us end the proof by focusing our attention on the continuity with respect to the time variable. Since 
\[
\psi^\varepsilon(t,x)=\sum_{n\ge 1}\mathbb{P}(s_n^\varepsilon\le t),
\]
where $s_n^\varepsilon$ is an absolutely continuous random variable, the Lebesgue monotone convergence theorem implies the continuity of $t\mapsto\psi^\varepsilon(t,x)$.
\end{proof}
 We give now an important result concerning the function $\psi^\varepsilon (t,x)=\mathbb{E}[N_t^\varepsilon|x_0^\varepsilon=x]$.
\begin{proposition}
\label{epsilon2-psiepsilon} Under Assumption \ref{assu1} and Assumption \ref{assu2}
, there exist $C_T>0$, $\kappa>0$ and $\varepsilon_0>0$ such that 
\begin{equation}\label{eq:expo-bound}
\varepsilon^2 \psi^\varepsilon (t,x) \le  C_T\,e^{\kappa |x|},\quad \forall t\in[0,T],\ \forall x\in\R,\ \forall\varepsilon\le \varepsilon_0.
\end{equation}
Let us note that the constant $\kappa$ is explicit: it suffices to choose $\kappa=3\sqrt{2}\kappa_2$ where $\kappa_2$ corresponds to the constant introduced in Assumption \ref{assu1}.
\end{proposition}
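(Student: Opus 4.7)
My approach combines the Brownian coupling of Remark~\ref{rem:generaleta} with a pathwise reduction to a renewal walk. In the coupling, $x_n^\varepsilon=x+B_{s_n^\varepsilon}$ (up to the negligible $\varepsilon\eta$-overshoot) for a standard Brownian motion $B$, so on $\{s_n^\varepsilon\le t\}$ one has $|x_k^\varepsilon-x|\le M_t^\ast:=\sup_{s\le t}|B_s|$ for every $k\le n$. Since $\eta$ is decreasing in $|y|$ by the monotonicity of $F$ built into \eqref{def:eta1}, this yields the pathwise lower bound $s_n^\varepsilon\ge \varepsilon^2\eta^2(|x|+M_t^\ast)\,S_n$ with $S_n:=\sum_{k=1}^n e^{1-A_k}$. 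For any deterministic $R>0$, the containment
\[
\{s_n^\varepsilon\le t\}\cap\{\max_{k\le n}|x_k^\varepsilon-x|\le R\}\;\subset\;\Big\{S_n\le \tfrac{t}{\varepsilon^2\eta^2(|x|+R)}\Big\}
\]
has a \emph{deterministic} right-hand side, so summing over $n$ and invoking Lorden's renewal inequality $\mathbb{E}[\hat N_u]\le C(1+u)$ for the counting process $\hat N_u:=\#\{k\ge 1:S_k\le u\}$ produces
\[
\sum_{n\ge 1}\mathbb{P}\big(s_n^\varepsilon\le t,\,\max_{k\le n}|x_k^\varepsilon-x|\le R\big)\;\le\; C\Big(1+\tfrac{t}{\varepsilon^2\eta^2(|x|+R)}\Big),
\]
which already captures the correct $\varepsilon^{-2}$ scaling.

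\textbf{Steps~2--3: iteration.} For the complementary event $\{\max_k|x_k^\varepsilon-x|>R\}$ I would introduce $\tau_R:=\inf\{k\ge 0:|x_k^\varepsilon-x|>R\}$ and apply the strong Markov property: on $\{\tau_R\le N_t^\varepsilon\}$ the skeleton restarts from $x_{\tau_R}^\varepsilon$, whose modulus is at most $|x|+R+\varepsilon\|\eta\|_\infty$, and the symmetry and monotonicity of $y\mapsto\psi^\varepsilon(t,y)$ in $|y|$ give
\[
\mathbb{E}[N_t^\varepsilon-\tau_R;\,\tau_R\le N_t^\varepsilon]\;\le\;\psi^\varepsilon\!\big(t,\,|x|+R+\varepsilon\|\eta\|_\infty\big)\cdot\mathbb{P}(M_t^\ast>R),
\]
together with the Gaussian bound $\mathbb{P}(M_t^\ast>R)\le 4\,e^{-R^2/(2T)}$. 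Using $1/\eta^2(y)\le\tilde C\,e^{2\sqrt2\,\kappa_2|y|}$, which follows from Assumption~\ref{assu1} via $\sqrt{2y^2+1}\le\sqrt2\,|y|+1$ and the definition \eqref{def:eta1}, and setting $G(x):=\varepsilon^2\sup_{0\le s\le T}\psi^\varepsilon(s,x)$, Steps~1 and~2 combine into a self-consistent estimate
\[
G(x)\;\le\; A_T\,e^{2\sqrt2\,\kappa_2(|x|+R)}\;+\;4\,e^{-R^2/(2T)}\,G\big(|x|+R+1\big),
\]
valid for $\varepsilon\le\varepsilon_0$. Choosing $R$ large enough (in terms of $T$ and $\kappa_2$) so that $4\,e^{-R^2/(2T)}e^{2\sqrt2\,\kappa_2(R+1)}<1$, iterating this inequality gives a convergent geometric series; the pointwise finiteness of $\psi^\varepsilon$ established in Proposition~\ref{prop:finite-mean} legitimises passing to the tail. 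The outcome is $G(x)\le C_T\,e^{\kappa|x|}$, and the stated value $\kappa=3\sqrt2\,\kappa_2$ is easily large enough to absorb the additive contributions from $R$ as well as the exponential slack needed to force the geometric series to converge.

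\textbf{Main obstacle.} The delicate point is that $M_t^\ast$ and the renewal walk $(S_n)$ are not independent in the coupling---both are functionals of the driving Brownian motion---so Lorden's bound cannot be applied to $\mathbb{E}[\hat N_U]$ with random $U$ directly. The pathwise containment in Step~1 is precisely what bypasses this difficulty by reducing everything to deterministic thresholds, and this reduction is what yields the sharp $\varepsilon^{-2}$ scaling: the cruder splitting $m_n^\varepsilon\gtrless n^{-2/3}$ used in the proof of Proposition~\ref{prop:finite-mean} would, through the Laplace transform route, only produce an estimate of order $\varepsilon^{-6}$, far too weak for the present statement. A secondary point to justify is the monotonicity of $\psi^\varepsilon(t,y)$ in $|y|$ used in Step~2; this follows from a straightforward synchronous coupling of two skeletons sharing the same $(A_n,Z_n)$, exploiting that $\eta$ is itself monotone in $|y|$.
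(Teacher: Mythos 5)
Your architecture is genuinely different from the paper's. The paper splits $\mathbb{P}(s_n^\varepsilon\le t)$ according to whether $m_n^\varepsilon=\min_{0\le k\le n-1}\eta^2(x_k^\varepsilon)$ exceeds the \emph{$\varepsilon$-dependent} threshold $\varepsilon^{-4/3}n^{-2/3}$; the first piece is handled by cutting the sum at $n_\varepsilon^T\sim\varepsilon^{-2}$ (the head contributes $\varepsilon^2 n_\varepsilon^T=O(1)$) plus Hoeffding's inequality for the tail, giving a bound independent of $x$, and the second piece by the reflection principle together with the integral estimate of Lemma \ref{lemma:R}, giving $C_T e^{3\sqrt2\kappa_2|x|}$ directly --- no iteration or self-consistency is needed. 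Your Step~1 (pathwise containment into a deterministic renewal event, then Lorden's inequality) is correct and does capture the $\varepsilon^{-2}$ scaling cleanly. The difficulties are concentrated in Steps~2--3.

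There are two genuine gaps there. First, the monotonicity of $y\mapsto\psi^\varepsilon(t,y)$ in $|y|$ is not delivered by the synchronous coupling you invoke: if $|y_0|\le|\hat y_0|$ then $\eta(y_0)\ge\eta(\hat y_0)$, so the skeleton started \emph{closer} to the origin takes the \emph{larger} spatial step, and the ordering of absolute values is not propagated (e.g. $y_0=0.05$, $\hat y_0=0.1$, $Z_1=-1$ and a step of size $0.2$ give $|y_1|=0.15>|\hat y_1|=0.1$). You would have to replace $\psi^\varepsilon(t,x^\varepsilon_{\tau_R})$ by $\sup_{|y|\le|x|+R+1}\psi^\varepsilon(t,y)$, which is finite by Propositions \ref{prop:finite-mean} and \ref{prop:conti}. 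Second, and more seriously, the iteration of your self-consistent inequality does not close as stated: after $J$ iterations the remainder is of the form $\bigl(4e^{-R^2/(2T)}\bigr)^J g\bigl(|x|+J(R+1)\bigr)$ with $g(\rho)=\sup_{|y|\le\rho}\varepsilon^2\sup_{s\le T}\psi^\varepsilon(s,y)$, and the pointwise finiteness from Proposition \ref{prop:finite-mean} gives no control on the growth of $g(\rho)$ as $\rho\to\infty$; without an a priori bound of at most exponential type (even an $\varepsilon$-dependent one) this remainder need not vanish, which is circular since such a bound is essentially the content of the proposition. The gap is repairable --- making the proof of Proposition \ref{prop:finite-mean} quantitative yields $\psi^\varepsilon(t,y)\le C_\varepsilon+C_T e^{3\kappa|y|}$, after which the geometric factor kills the remainder for $R$ large --- but as written the argument is incomplete, whereas the paper's one-pass decomposition avoids the issue entirely.
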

\begin{proof} 
The proof follows similar ideas as those developed in Proposition \ref{prop:finite-mean}. We introduce here $m_n^\varepsilon:=\min_{0\le k\le n-1}\eta^2(x_k^\varepsilon)$ and recall that
\begin{equation}
\psi^\varepsilon(t,x) = \ds\sum_{n\ge 1} \mathbb{P} (s_n^\varepsilon \le t).
\end{equation}
\noindent We aim to control 
\begin{equation}
\varepsilon^2 \psi^\varepsilon(t,x) =\sum_{n\ge 1}(\varepsilon^2 u_n^\varepsilon(t,x)+\varepsilon ^2 v_n^\varepsilon(t,x) )
\end{equation}
by using similar notations as those presented in Proposition  \ref{prop:finite-mean}, that is 
\begin{align}\label{eq:11}
u_n^\varepsilon(t,x) =  \mathbb{P}  (s_n^\varepsilon \le t; m_n^\varepsilon \ge \varepsilon^\gamma n^{-2/3}|x_0^\varepsilon=x ), \\
v_n^\varepsilon(t,x) =\mathbb{P}  (s_n^\varepsilon \le t; m_n^\varepsilon < \varepsilon^\gamma n^{-2/3}|x_0^\varepsilon=x).
\end{align}
Here a suitable choice of the exponent $\gamma$ should ensure the required boundedness. We shall discuss about this choice in the following.\\
{\bf Step 1.} Consider first the sequence $(u_n^\varepsilon(t,x))_{n\ge 1}$.
Let us introduce $n_\varepsilon^T:=3^{9/2}(2T/e)^{3} \varepsilon^{-2}$ and decompose the series associated to $u_n^\varepsilon$ as follows:
\begin{equation}\label{eq:compl1}
\begin{array}{ll}
\ds\sum_{n\ge 1}\varepsilon^2 u_n^\varepsilon(t,x)&=\varepsilon^2\sum_{n=1}^{n_\varepsilon^T} u_n^\varepsilon(t,x)+\varepsilon^2\sum_{n>n_\varepsilon^T}u_n^\varepsilon(t,x)\\
\ds &\ds\le \varepsilon^2\cdot n_\varepsilon^T+\varepsilon^2\sum_{n>n_\varepsilon^T}u_n^\varepsilon(t,x)\le 3^{9/2}(2T/e)^{3}+\varepsilon^2\sum_{n>n_\varepsilon^T}u_n^\varepsilon(t,x).
\end{array}
\end{equation}
We focus our attention on the indices satisfying $n> n_\varepsilon^T$. Let us set $\gamma=-4/3$. For this particular choice, both the definition \eqref{eq:11} and the definition of $n_\varepsilon^T$ lead to
\begin{equation*}
\begin{array}{ll}
\ds u_n^\varepsilon(t,x)
& \le \mathbb{P} \left( e^{1-A_1} +\ldots + e^{1-A_n} \le tn^{2/3}\varepsilon ^{-(\gamma+2)}\right)\\[4pt]
\ds & \ds\le \mathbb{P} \left( e^{1-A_1} +\ldots + e^{1-A_n} -n\mathbb{E}[e^{1-A_1}] \le Tn^{2/3}\varepsilon ^{-(\gamma+2)}-n e 3^{-3/2}\right)\\[4pt]
\ds &\ds \le \mathbb{P} \left( e^{1-A_1} +\ldots + e^{1-A_n} -n\mathbb{E}[e^{1-A_1}] \le -Tn^{2/3}\varepsilon^{-(\gamma+2)}\right).
\end{array}
\end{equation*}
Using Hoeffding's inequality permits to obtain the following upper-bound:
\begin{align}\label{eq:compl2}
\varepsilon^2\sum_{n>n_\varepsilon^T}u_n^\varepsilon(t,x)\le \varepsilon^2\sum_{n>n_\varepsilon^T}\exp\left(- \frac{2T^2n^{4/3}\varepsilon^{-2(\gamma+2)}}{ne^2}  \right)\le \varepsilon^2\sum_{n\ge 1}\exp\left(- \frac{2T^2n^{1/3}}{e^2\varepsilon^{4/3}}\right).
\end{align}
Combining \eqref{eq:compl1} and \eqref{eq:compl2}, we obtain for any $\varepsilon\le 1$:
\begin{align*}
\sum_{n\ge 1}\varepsilon^2 u_n^\varepsilon(t,x)\le 3^{9/2}(2T/e)^{3}+\sum_{n\ge 1}\exp\left(- \frac{2T^2\,n^{1/3}}{e^2}\right)=:C_T^0<+\infty.
\end{align*}

Let us just note that the upper-bound does not depend on the space variable $x$.
\par\noindent{\bf Step 2.}  Let us focus now on the second part, that is, the terms $\varepsilon^2 v_n^\varepsilon(t,x)$. Using the properties of $F$ (see Assumption \ref{assu1}), there exist $C>0$ and $\kappa>0$ such that $\eta(z) \ge Ce^{-\kappa |z|}$ (the value of $\kappa$ here corresponds to $\sqrt{2}\kappa_2$ where $\kappa_2$ is the constant appearing in Assumption \ref{assu1})
\begin{equation}
\begin{array}{ll}
\varepsilon ^2 v_n^\varepsilon(t,x) 
&\le \varepsilon ^2 \mathbb{P} \left(\exists s\le t \mbox{ s.t. } \eta^2(B_s+x)< \varepsilon ^\gamma n^{-2/3} \right)\\
&  \le \varepsilon^2 \mathbb{P} \Big( C^2 \exp\Big( -2\kappa\ds\sup_{s\in[0,t]} |B_s+x| \Big) <\varepsilon^\gamma n^{-2/3} \Big) \\
&  \le \varepsilon^2 \mathbb{P} \Big(   \exp\Big( -\ds\sup_{s\in[0,t]} |B_s+x| \Big) <\Big(C^{-1}\varepsilon^{\frac{\gamma}{2}} n^{-1/3}\Big)^{1/\kappa} \Big),
\end{array}
\end{equation}
for all $(t,x)\in[0,T]\times \R$.
We need here an auxiliary result. 
\begin{lemma}
\label{lemma:R} Let us define the function
\begin{equation}
\mathcal{R}_t(x,z) :=\mathbb{P}\Big( \ds\sup_{s\in[0,t]} |x+B_s|  >\ln(z)\Big).
\end{equation} 
 For any $(x,z)\in\R\times(0,\infty)$ we have $\mathcal{R}_t(x,z) \geq 0$  and $z\mapsto \mathcal{R}_t(x,z)$ is non increasing. Furthermore for any $\delta>0$, there exists $C_T>0$ such that
\begin{equation}
\label{ineg:R}
\ds\int_0^{+\infty} \mathcal{R}_t(x,z^{1/\delta})\, \dint z \le C_T\  e^{\delta |x|},\quad \forall (t,x)\in[0,T]\times \R.
\end{equation}
\end{lemma}
\noindent We postpone the proof of this lemma. We observe therefore
\begin{equation}
v_n^\varepsilon(t,x)  \le \mathcal{R}_t\left(x, \Big(\frac{C n^{1/3}}{\varepsilon^{\gamma/2}}\Big)^{1/\kappa} \right) = \mathbb{P}\left( \ds\sup_{s\in[0,t]} |x+B_s|  >\frac{1}{\kappa}\ln\left( \frac{C n^{1/3}} {\varepsilon^{\gamma/2} }\right)\right).
\end{equation}
We define $n(\varepsilon) =\inf \{ n\geq 0 \mbox{ s.t. } Cn^{1/3} \geq \varepsilon^{\gamma/2}\} $. Then
\begin{equation}
\begin{array}{ll}
\varepsilon^2\ds\sum_{n\ge 1} v_n^\varepsilon(t,x)& = \varepsilon^2\ds\sum_{n=1}^{n(\varepsilon)}  v_n^\varepsilon(t,x)+ \varepsilon^2\ds\sum_{n\ge n(\varepsilon)+1} v_n^\varepsilon(t,x)\\
& \le \varepsilon^2 n(\varepsilon) + \varepsilon^2 \ds\sum_{n\ge n(\varepsilon )+1  } \mathcal{R}_t\left(x, \Big(\ds\frac{Cn^{1/3}}{\varepsilon ^{\gamma/2}}\Big)^{1/\kappa}\right)\\
& \le  \varepsilon^2 n(\varepsilon) + \varepsilon^2 \ds\int_{n(\varepsilon) } ^{+\infty} \mathcal{R}_t\left(x, \Big(\ds\frac{Cz^{1/3}}{\varepsilon ^{\gamma/2}}\Big)^{1/\kappa}\right)\dint z.
\end{array}
\end{equation} 
In the last expression only the term under the integral depends on $x$. We perform the change of variable in this term of the form $y=\varepsilon^2 z$ and obtain:
\begin{equation}
\begin{array}{ll}
\varepsilon^2\ds\sum_{n\ge 1} v_n^\varepsilon(t,x)& \le  \varepsilon^2 n(\varepsilon) + \ds\int_{\varepsilon^2n(\varepsilon) } ^{+\infty} \mathcal{R}_t\left(x, \Big(\ds\frac{Cy^{1/3}}{\varepsilon ^{2/3+\gamma/2}}\Big)^{1/\kappa}\right)\dint y\\
& \le  \varepsilon^2 n(\varepsilon) + \ds\int_0^{+\infty} \mathcal{R}_t(x,C^{1/\kappa}y^{1/3\kappa})\dint y \le  \varepsilon^2 n(\varepsilon) + C_T e^{3\kappa |x|},
\end{array}
\end{equation} 
by using the particular value $\gamma = -4/3$ and Lemma \ref{lemma:R}. In order to conclude we need to control $\varepsilon^2 n(\varepsilon)$.  By the definition of $n(\varepsilon)$ we have
\begin{equation}
 n(\varepsilon) = \ds\inf\left\{ n\ge 0 \mbox{ s.t. } n\ge \ds\frac{1}{\varepsilon^2C ^3}\right\} \le \frac{1}{\varepsilon^2 C^3} +1,\quad \mbox{for}\ \varepsilon\le 1.
\end{equation} 
This allows us to conclude that
$\varepsilon ^2 n(\varepsilon)  \leq \frac{1}{C^3}+1$ for $\varepsilon\le 1$. Combining the two steps of the proof leads to the announced upper-bound \eqref{eq:expo-bound}.
\end{proof}
\begin{proof}[Proof of Lemma \ref{lemma:R}]
The proof of the first two properties is obvious by using the definition of $\mathcal{R}_t$. Let us show that 
\eqref{ineg:R} is true.  By using the reflection principle of the Brownian  motion we can evaluate
\begin{equation}
\label{intermediar:R}
\begin{array}{ll}
\ds\int_0^{+\infty} \mathcal{R}_t(x,z^{1/\delta}) \dint z & =\ds\int_0^{e^{\delta(1+|x|)}} \mathcal{R}_t(x,z^{1/\delta})\dint z + \ds\int_{e^{\delta(1+|x| )}} ^{+\infty} \mathcal{R}_t(x,z^{1/\delta})\dint z\\
& \le e^{\delta(1+|x|)} + 4\ds\int _{e^{\delta(1+|x|)}} ^{+\infty} \mathbb{P} \Big( G >\ds\frac{1}{\sqrt{t}} \Big(\ds\frac{1}{\delta} \ln(z) -|x|\Big) \Big) \dint z\\
& \le e^{\delta(1+|x|)} + 4\ds\int _{e^{\delta(1+|x|)}} ^{+\infty} \mathbb{P} \Big( G >\ds\frac{1}{\sqrt{T}} \Big(\ds\frac{1}{\delta} \ln(z) -|x|\Big) \Big) \dint z, 
\end{array}
\end{equation}
where $G$ denotes a standard normal random variable ${\cal {N}} (0,1)$. We used the fact that for any $u>0 $ we have $\mathbb{P} (G\geq u) \le \frac{1}{u\sqrt{2\pi}} e^{-\frac{u^2}{2}}$. Hence, for $z\ge e^{\delta(1+|x|)}$,
\begin{align*}
\mathbb{P} \Big( G >\ds\frac{1}{\sqrt{T}} \Big(\ds\frac{1}{\delta} \ln(z) -|x|\Big) \Big)&\le  \frac{\delta\sqrt{T}}{(\ln(z)-\delta|x|)\sqrt{2\pi}}\, \exp\Big\{-\frac{1}{2T}\Big(\frac{1}{\delta}\,\ln(z)-|x|\Big)^2\Big\}\\
&\le \frac{\sqrt{T}}{\sqrt{2\pi}}\, \exp\Big\{-\frac{1}{2T}\Big(\frac{1}{\delta}\,\ln(z)-|x|\Big)^2\Big\}.
\end{align*}
We deduce
\begin{align*}
\ds\int_0^{+\infty} \mathcal{R}_t(x,z^{1/\delta}) \dint z &\le e^{\delta(1+|x|)}+ \frac{4\sqrt{T}}{\sqrt{2\pi}}\ds\int _{e^{\delta(1+|x|)}} ^{+\infty}  \, \exp\Big\{-\frac{1}{2T}\Big(\frac{1}{\delta}\,\ln(z)-|x|\Big)^2\Big\}\dint z.
\end{align*}
By doing the change of variable $z= e^{\delta(1+|x|)} y$, we have
\begin{align*}
\ds\int_0^{+\infty} \mathcal{R}_t(x,z^{1/\delta}) \dint z & \le e^{\delta(1+|x|)}\Big[ 1+ \ds\frac{4\sqrt{T}}{\sqrt{2\pi }} \ds\int_1^{+\infty} e^{\ds-\frac{1}{2T}( \frac{1}{\delta} (\delta(1+|x|) +\ln y) -|x|)^2}\dint y \Big]\\
& \le   e^{\delta(1+|x|)}\Big[ 1+\ds\frac{4\sqrt{T}}{\sqrt{2\pi }} \ds\int_1^{+\infty} e^{-\ds\frac{1}{2T}(1+\frac{1}{\delta}\ln(y))^2} \dint y \Big]=:C_T\,e^{\delta |x|}.
\end{align*}
The upper-bound holds for any $(t,x)\in[0,T]\times\R$ as announced.
\end{proof}
}
Since Proposition \ref{prop:finite-mean} and Proposition \ref{prop:conti} point out different preliminary properties of the average number of steps needed by the Brownian skeleton to cover the time interval $[0,T]$, the study of the $\varepsilon$-strong approximation of both the linear and growth diffusions can be achieved. 
\begin{thm}\label{thm:efficient:diff}
Let $(X_t)_{0\le t\le T}$ be a solution of the stochastic differential equation  \eqref{eq:eds} satisfying both Assumptions \ref{assu1} and \ref{assu2}. Let $(y^\varepsilon_t)_{0\le t\le T}$ be the $\varepsilon$-strong approximation of $(X_t)_{0\le t\le T}$ given by \eqref{eq:approx:diff} and $\hat{N}^\varepsilon_T$ the random number of points needed to build this approximation. Then, there exist $\mu>0$  such that
\begin{equation}
\label{eq:thm:diff:efficient}
\lim_{\varepsilon\to 0}\varepsilon^2\,\mathbb{E}[\hat{N}^\varepsilon_T]=\mu\,\mathbb{E}\left[ \ds\int_0^{\rho(T)}  \frac{1}{\eta^2(x+B_s)}\dint s\right],\quad \forall (T,x)\in\R_+\times\R
\end{equation}
where $(B_t)_{t\ge 0}$ stands for a standard one-dimensional Brownian motion.
\end{thm}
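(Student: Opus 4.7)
The plan is to exploit the Brownian-skeleton coupling of Remark~\ref{rem:generaleta}: one can realize the sequences $(s_n^\varepsilon, x_n^\varepsilon)$ so that $x_n^\varepsilon = x + B_{s_n^\varepsilon}$ for a standard Brownian motion $(B_t)_{t\ge 0}$, in which case $\hat{N}_T^\varepsilon = N_{t^*}^\varepsilon$ with $t^* := \rho(T)$. The iid structure hidden in the skeleton is exposed by writing $U_k^\varepsilon = \varepsilon^2 \eta^2(x_{k-1}^\varepsilon)\,\tilde V_k$ with $\tilde V_k := e^{1-A_k}$ iid, satisfying $\mathbb{E}[\tilde V_1] = e\cdot 3^{-3/2} =: 1/\mu$, which identifies the constant announced in the statement.

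\textbf{Almost-sure limit.} The telescoping identity
\[
\varepsilon^2 \sum_{k=1}^{n} \tilde V_k \;=\; \sum_{k=1}^{n} \int_{s_{k-1}^\varepsilon}^{s_k^\varepsilon} \frac{\dint s}{\eta^2(x_{k-1}^\varepsilon)}
\]
will be used with $n = N_{t^*}^\varepsilon$. The right-hand side is a Riemann-type sum which I would compare to $\int_0^{s_{N_{t^*}^\varepsilon}^\varepsilon} \eta^{-2}(x+B_s)\,\dint s$. The bound $|x+B_s - x_{k-1}^\varepsilon|\le \varepsilon\,\eta(x_{k-1}^\varepsilon)$ on each exit interval (Remark~\ref{rem:generaleta}) together with the Lipschitz continuity of $y\mapsto 1/\eta^2(y)$ inherited from Assumption~\ref{assu1} controls this discrepancy; the endpoint gap $s_{N_{t^*}^\varepsilon}^\varepsilon - t^*$ vanishes since $U_k^\varepsilon \le e\varepsilon^2 M^2$ with $M := \sup \eta < \infty$. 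On the left-hand side, because $N_{t^*}^\varepsilon \ge t^*/(eM^2\varepsilon^2) \to \infty$ and the iid sequence $(\tilde V_k)$ does not depend on $\varepsilon$, the strong law of large numbers along the random index $N_{t^*}^\varepsilon$ yields $\sum_{k=1}^{N_{t^*}^\varepsilon} \tilde V_k = N_{t^*}^\varepsilon/\mu \,(1+o(1))$ a.s. Combining these two asymptotics one obtains
\[
\varepsilon^2 N_{t^*}^\varepsilon \xrightarrow[\varepsilon \to 0]{\text{a.s.}} \mu \int_0^{t^*} \frac{\dint s}{\eta^2(x + B_s)}.
\]

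\textbf{From almost-sure to $L^1$ convergence.} To promote this to the claimed convergence of expectations, I would establish uniform integrability of $\{\varepsilon^2 N_{t^*}^\varepsilon\}_{\varepsilon>0}$. Proposition~\ref{epsilon2-psiepsilon} already delivers the $L^1$-bound $\sup_\varepsilon \mathbb{E}[\varepsilon^2 N_{t^*}^\varepsilon] \le C_T e^{\kappa |x|}$; the a.s. limit in turn has finite expectation since $1/\eta^2$ grows at most exponentially in $|B_s|$. To upgrade the a priori bound to a uniform $p$-th moment bound for some $p>1$, I would re-run the decomposition \eqref{eq:decomp} of the proof of Proposition~\ref{epsilon2-psiepsilon}: Hoeffding's inequality produces super-exponential tails for $u_n^\varepsilon$ and the Brownian reflection estimate produces tails on $v_n^\varepsilon$ which both absorb an additional polynomial weight $n^{p-1}$ without loss. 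Vitali's convergence theorem then transfers the a.s. limit to an $L^1$ limit, and evaluating at $t^* = \rho(T)$ gives exactly \eqref{eq:thm:diff:efficient}.

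\textbf{Main obstacle.} The pathwise asymptotics is a transparent consequence of the Brownian-skeleton coupling and of the strong law. The delicate step is the uniform integrability: Proposition~\ref{epsilon2-psiepsilon} provides only an $L^1$-bound, so the essential technical effort is an $L^p$ refinement of it. No new idea is required beyond the Hoeffding and reflection-principle estimates already developed for Propositions~\ref{prop:finite-mean} and~\ref{epsilon2-psiepsilon}, but the bookkeeping for higher-order moments must be handled carefully so that the exponential-in-$|x|$ dependence of the constants is preserved.
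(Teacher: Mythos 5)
Your route is genuinely different from the paper's. The paper never argues pathwise: it derives a renewal equation for $\psi^\varepsilon(t,x)=\mathbb{E}[N^\varepsilon_t|x_0^\varepsilon=x]$, identifies the candidate limit $U(t,x)$ as the probabilistic solution of the parabolic problem \eqref{PDE}, and shows that the difference $D^\varepsilon=\varepsilon^2\psi^\varepsilon-U$ obeys a renewal inequality which, after iteration along the Markov chain and localisation in $[-R,R]$, forces $D^\varepsilon\to0$. Your occupation-time/law-of-large-numbers strategy is precisely the alternative the authors mention and decline to pursue in Remark \ref{rem1}; it is more probabilistically transparent and avoids the PDE step, at the price of having to convert a pathwise limit into a limit of expectations. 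Your plan for that last step is sound: an $L^p$ refinement of Proposition \ref{epsilon2-psiepsilon} for some $p>1$ is indeed attainable, because after the rescaling $y=\varepsilon^2 n$ both the Hoeffding tail and the reflection-principle tail appearing in the decomposition \eqref{eq:decomp} decay faster than any polynomial in $n$, so the extra weight $n^{p-1}$ is absorbed.

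There is, however, one concrete gap in the almost-sure limit as you state it. You simultaneously require that a single Brownian motion $B$ interpolate all the skeletons ($x_n^\varepsilon=x+B_{s_n^\varepsilon}$ for every $\varepsilon$) \emph{and} that the normalised interarrivals $\tilde V_k=e^{1-A_k}$ be independent of $\varepsilon$. These two requirements are incompatible. If you fix $(A_n,Z_n)$ as in the definition of $({\rm BS})_\eta$, the skeletons for different $\varepsilon$ are only \emph{equal in law} to exit points of (different) Brownian paths, and $\int_0^{t^*}\eta^{-2}(x+B_s)\,\dint s$ is not realised as a single random variable along the family. If instead you fix $B$ and read off the successive exit times of the $\phi_\varepsilon$-domains, then the $\tilde V_k=\tilde V_k^\varepsilon$ form a triangular array whose rows are iid but change with $\varepsilon$, and the strong law ``along the random index $N^\varepsilon_{t^*}$'' does not apply as invoked. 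The repair is available but must be made explicit: in the second coupling $\tilde V_k^\varepsilon\le e$, so Hoeffding's inequality gives a row-wise law of large numbers, uniform over indices $n\ge t^*/(eM^2\varepsilon^2)$, hence convergence in probability of $\varepsilon^2N^\varepsilon_{t^*}$ to $\mu\int_0^{t^*}\eta^{-2}(x+B_s)\,\dint s$, which together with uniform integrability yields \eqref{eq:thm:diff:efficient}. A second, smaller inaccuracy: $y\mapsto1/\eta^2(y)$ is proportional to $F^2(2y^2+1)$ and is only \emph{locally} Lipschitz — Assumption \ref{assu1} makes $\eta$, not $1/\eta^2$, globally Lipschitz — so the Riemann-sum comparison must be localised on the event that the path stays in a compact set before letting the compact grow.
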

\begin{rem} \label{rem1}\begin{enumerate}
\item The constant appearing in the statement is explicitly known. Let us introduce $M$ the cumulative distribution function associated to the random variable $e^{1-A}$ with $A\sim{\Gamma}(3/2,2)$. We denote by $M(f)=\int_0^\infty f(s)\,\dint M(s)$, for any nonnegative function $f$. Then
\begin{equation}
\label{eq:ajust:const}
\mu=\frac{1}{M(\phi_1^2)}\quad \mbox{and}\quad M(\phi_1^2)=M({\rm Id})=e\,3^{-3/2}\approx 0.5231336.
\end{equation}
\item Let us note that the link between the function $\eta$, defining the approximation scheme, and the function $F$ introduced in Assumption \ref{assu1}, permits to write
\begin{align*}
\lim_{\varepsilon\to 0}\varepsilon^2\,\mathbb{E}[\hat{N}^\varepsilon_T]&=(e\kappa_{\rm min}^{-1}+1)^2\,\mu\,\mathbb{E}\left[ \ds\int_0^{\rho(T)}  F^2(2(x+B_s)^2+1)\dint s\right]\\
&=(e\kappa_{\rm min}^{-1}+1)^2\,\mu\,\mathbb{E}\left[ \ds\int_\R  F^2(2(x+y)^2+1) L_{\rho(T)}  (y) \dint y\right].
\end{align*}
The last equality is just an immediate application of the occupation time formula (see, for instance, Corollary 1.6 page 209 in \cite{Revuz-Yor}), $L_t(y)$ standing for the local time of the standard Brownian motion. A proof of Theorem \ref{thm:efficient:diff} based on the local times of the Brownian motion and therefore on a precise description of the Brownian paths could be investigated, we prefer here to propose a proof involving a renewal property of the average number of points in the numerical scheme.
\end{enumerate}
\end{rem}
\begin{proof}[Proof of Theorem \ref{thm:efficient:diff}] We start to mention that the notation of the constants is generic through this proof: $C:=C_\theta$ or $\kappa:=\kappa_\theta$ if the constants depend on a parameter $\theta$.\\ The proof of the theorem is based on the study of a particular renewal inequality. The material is organized as follows: on one hand we shall prove that 
$u^\varepsilon(t,x):=\varepsilon^2\mathbb{E}[N_t^\varepsilon|x_0^\varepsilon=x]$, $N^\varepsilon_t$ being defined in \eqref{eq:def:count}, satisfies a renewal equation. On the other hand, we describe $U(t,x)$ defined by 
\begin{equation}
\label{eq:def:Utx}
U(t,x):=\mu\mathbb{E}\left[ \ds\int_0^{ t}  \frac{1}{\eta^2(x+B_s)}\dint s\right]=\frac{(e\kappa_{\rm min}^{-1}+1)^2}{M(\phi_1^2)}\,\mathbb{E}\left[ \ds\int_0^{t}  F^2(2(x+B_s)^2+1)\dint s\right],
\end{equation}
where $\mu$ corresponds to the constant introduced in the statement of the theorem and described in Remark \ref{rem1}. Then we observe that the difference:
\begin{equation}\label{eq:def:diff}
D^\varepsilon(t,x):=u^\varepsilon(t,x)-U(t,x)
\end{equation}
satisfies a renewal inequality which leads to $\lim_{\varepsilon\to 0}D^\varepsilon(t,x)=0$.\\[5pt]
{\bf Step 1. Renewal equation satisfied by \mathversion{bold}$\psi^\varepsilon(t,x)=\mathbb{E}[N^\varepsilon_t|x_0^\varepsilon=x]$\mathversion{normal}}.
Let us note that $\psi^\varepsilon(t,x)$ satisfies the following renewal equation:
\begin{equation}
\label{eq:renew}
\psi^\varepsilon(t,x)=M(t/(\varepsilon^2\eta^2(x)))+\sum_{i=\pm 1}\frac{1}{2}\int_0^{t/(\varepsilon^2\eta^2(x))}\psi^\varepsilon\Big(t-s\varepsilon^2\eta^2(x),\ x+i\varepsilon\eta(x)\phi_1(s)\Big)\,\dint M(s)
\end{equation}
where $M$ corresponds to the cumulative distribution function associated to the random variable $e^{1-A}$ with $A\sim{\Gamma}(3/2,2)$.
Indeed, we focus our attention to $U_1^\varepsilon$ the first positive abscissa of the Brownian paths skeleton $({\rm BS})_\eta$. 

We can observe two possibilities:
\begin{itemize}
\item Either $U_1^\varepsilon>t$ and consequently $N_t^\varepsilon=0$.
\item Either $U_1^\varepsilon=s\varepsilon^2\eta^2(x)\le t$. The Markov property implies the following identity in distribution: for any non negative measurable function $\mathcal{H}$, we have
\[
\mathbb{E}[\mathcal{H}(N_t^\varepsilon)|x_0^\varepsilon,\, U_1^\varepsilon]=\mathbb{E}[\mathcal{H}_0(t-s\varepsilon^2\eta^2(x), x_1^\varepsilon)|x_0^\varepsilon,\, U_1^\varepsilon]\ \mbox{with}\ \mathcal{H}_0(t,y)=\mathbb{E}[\mathcal{H}(1+N^\varepsilon_{t})|x_0^\varepsilon=y].
\]
We just note that the function $\mathcal{H}_0(t,y)$ associated with  $\mathcal{H}(y)=y1_{y\ge 0}$ corresponds to $1+\psi^\varepsilon(t,y)$.
\end{itemize}
 Hence
\begin{align*}
\psi^\varepsilon(t,x)
&=M(t/(\varepsilon^2\eta^2(x))+\int_0^{t/(\varepsilon^2\eta^2(x))}\mathbb{E}\left[\psi^\varepsilon(t-s\varepsilon^2\eta^2(x),x_1^\varepsilon)\,\Big|x_0^\varepsilon=x\right]\,\dint M(s).
\end{align*}
Let us now introduce $u^\varepsilon(t,x):=\varepsilon^2 \psi^\varepsilon(t,x)$ and define for any nonnegative function $h$:
\begin{equation}
\label{eq:def:P}
\mathcal{M}_\varepsilon h(t,x)=\sum_{i=\pm 1}\frac{1}{2}\int_0^{t/(\varepsilon^2\eta^2(x))}h\Big(t-s\varepsilon^2\eta^2(x),\ x+i\varepsilon\eta(x)\phi_1(s)\Big)\,\dint M(s).
\end{equation}
Then the following renewal equation holds
\begin{align}\label{eq:renounew}
u^\varepsilon(t,x)&=\varepsilon^2M(t/(\varepsilon^2\eta^2(x)))+\mathcal{M}_\varepsilon u^\varepsilon(t,x),\quad \forall (t,x)\in\mathbb{R}_+\times\mathbb{R}.
\end{align}
\mathversion{bold}
\noindent {\bf Step 2. Description of the function $U(t,x)$ introduced in \eqref{eq:def:Utx}.}
\mathversion{normal}
Due to Assumption \ref{assu1}, $F$ is assumed to be a $\mathcal{C}^2(\mathbb{R})$-continuous function and $F$, $F'$ and $F''$ have at most exponential growth. The dominated convergence theorem permits therefore to obtain that $x\mapsto U(t,x)$ is also a $\mathcal{C}^2(\mathbb{R}, \mathbb{R})$-continuous function. Moreover, combining It\^o's formula and Lebesgue's theorem lead to the regularity with respect to both variables $t$ and $x$: $U$ is $\mathcal{C}^2(\mathbb{R}_+\times\mathbb{R},\mathbb{R})$-continuous. Since $U$ is regular and has at most exponential growth, it corresponds to the probabilistic representation (see for example Karatzas and Shreve \cite{karatzas-shreve-1991}, p. 270, Corollary 4.5) of the unique solution:
\begin{equation}
\label{PDE}
M({\rm Id})\frac{\partial U}{\partial t}(t,x)= \frac{M(\phi_1^2)}{2}\frac{\partial ^2 U}{\partial x^2}(t,x)+(e\kappa_{\rm min}^{-1}+1)^2F^2(2x^2+1),\quad U(0,x)=0.
\end{equation}
We just recall that $M({\rm Id})=M(\phi_1^2)$ (see Remark \ref{rem1}).\\
Let $R>0$. Using the Taylor expansion in order to compute the operator defined in \eqref{eq:def:P}, we obtain
\begin{align*}
\mathcal{M}_\varepsilon U(t,x) =&U(t,x) M(t/(\varepsilon^2\eta^2(x)))-\varepsilon^2\eta^2(x)M({\rm Id})\frac{\partial U}{\partial t}(t,x)\\
&+\frac{\varepsilon^2}{2}\eta^2(x)M(\phi_1^2)\frac{\partial^2U}{\partial x^2}(t,x)+\varepsilon^2o_{R}(1),
\end{align*}
where $o_{R}(1)$ tends uniformly towards $0$ on $[0,T]\times [-R,R]$ as $\varepsilon \to 0$ (the uniformity of the reminder term can be observed with classical computations, let us just note that $o_{R}(1)$ is a generic notation in the sequel). The equation \eqref{PDE} and the particular link between both functions $F$ and $\eta$ imply
\begin{align}\label{eq:operU}
\mathcal{M}_\varepsilon U(t,x)&=U(t,x) M(t/(\varepsilon^2\eta^2(x)))-\varepsilon^2+\varepsilon^2o_{R}(1), \quad \forall(t,x)\in [0,T]\times [-R,R].
\end{align}
\mathversion{bold}
\noindent {\bf Step 3. Study of the difference $D^\varepsilon(t,x)$ introduced in \eqref{eq:def:diff}.}
\mathversion{normal} Since both $u^\varepsilon$ and $U$ are continuous functions satisfying an exponential bound (immediate consequence of the regularity and growth property of $F$ for $U$ and statement of Proposition \ref{epsilon2-psiepsilon} for $u^\varepsilon$), so is $D^\varepsilon$. Hence, there exists $C>0$ and $\kappa>0$ such that 
\begin{align}
\label{eq:bound-D}
|D^\varepsilon(t,x)|\le Ce^{\kappa |x|},\quad \forall (t,x)\in[0,T]\times\R.
\end{align}
Moreover combining \eqref{eq:operU} and \eqref{eq:renounew} implies
\begin{align}\label{eq:inter}
D^\varepsilon(t,x)=(U(t,x)+\varepsilon^2)(M(t/(\varepsilon^2\eta^2(x)))-1)+\mathcal{M}_\varepsilon D^\varepsilon(t,x)+\varepsilon^2 o_{R}(1).
\end{align}
The support of the distribution associated to $M$ is compact. Moreover $\eta$ defined in \eqref{def:eta1} is upper-bounded. Consequently there exists $\rho>0$ (independent of $x$ and $\varepsilon$) such that $M(t/(\varepsilon^2\eta^2(x)))=1$ for all $t\ge \rho\varepsilon^2$ and $x\in\R$. For small values of $t$, that is $t\le \rho\varepsilon^2$, it suffices to use the regularity of $U$ with respect to that variable in order to get a constant $C_R>0$ such that $|U(t,x)|\le C_R \varepsilon^2$ for all $x\in[-R,R]$. To sum up the observations for any value of $t$: there exists 
$C_{R}>0$ such that
\begin{align}
\label{eq:ineq}
|D^\varepsilon(t,x)|\le \mathcal{M}_\varepsilon |D^\varepsilon|(t,x)+(C_{R}+1)\,\varepsilon^21_{\{t\le \rho\varepsilon^2\}}+\varepsilon^2 o_{R}(1),\quad \forall(t,x)\in[0,T]\times[-R,R]. 
\end{align}
\mathversion{bold}
\noindent {\bf Step 4. Asymptotic behaviour of $D^\varepsilon(t,x)$.}
\mathversion{normal}
It is possible to link the operator $\mathcal{M}_\varepsilon$ to the approximation scheme of the Brownian motion: the Brownian skeleton $({\rm BS})_\eta$. We recall that $s_n^\varepsilon=\sum_{k=1}^n U_k^\varepsilon$ and that $(s_n^\varepsilon,x_n^\varepsilon)_{n\ge 0}$ is a skeleton of the Brownian paths: the sequence (Markov chain) has the same distribution than points belonging to a Brownian trajectory. It represents the successive exit times and positions of small $\phi_\varepsilon$-domains also called heat-balls, the radius of any heat-ball being upper-bounded by $\varepsilon\eta(0)$. We observe that 
\[
\mathcal{M}_\varepsilon(h)(t,x)=\mathbb{E}[h(t-U^\varepsilon_1,x^\varepsilon_1)1_{\{U^\varepsilon_1\le t\}}|x_0^\varepsilon=x],\quad\mbox{for any nonnegative function }h.
\]
Consequently, for any $(t,x)\in[0,T]\times[-R,R]$, \eqref{eq:ineq} becomes
\begin{align}
\label{eq:ineq1}
|D^\varepsilon(t,x)|\le  \mathbb{E}[|D^\varepsilon(t-U^\varepsilon_1,x^\varepsilon_1)|1_{\{U^\varepsilon_1\le t\}}|x_0^\varepsilon=x]+C_{R}\,\varepsilon^21_{\{t\le \rho\varepsilon^2\}}+\varepsilon^2 o_{R}(1). 
\end{align}
Since the sequence $(s_n^\varepsilon,x_n^\varepsilon)_{n\ge 0}$ is a Markov chain, the aim is to iterate the upper-bound a large number of times. In order to achieve such a procedure, we need to ensure that $x_1^\varepsilon,\ldots, x_n^\varepsilon$ belong to the interval $[-R,R]$. We introduce 
\[
\tau_{R,\varepsilon}=\inf\{n\ge 0:\ x_n^\varepsilon\notin [-R,R]\}.
\]
The $\phi_\varepsilon$-domains associated to the Brownian approximation are bounded (their radius is less than $\varepsilon\eta(0)$), we therefore obtain that $|x_{\tau_{R,\varepsilon}}|\le R+\varepsilon\eta(0)$ and \eqref{eq:bound-D} implies the existence of $C>0$ and $\kappa>0$ such that $|D^\varepsilon(t,x_{\tau_{R,\varepsilon}})|\le Ce^{\kappa R}$, for any $t\le T$ and $\varepsilon\le 1$. Let us note that for notational convenience we use $\mathbb{P}_x$ (resp. $\mathbb{E}_x$) for the conditional probability (resp. expectation) with respect to the event $x_0^\varepsilon=x$.  Hence \eqref{eq:ineq} gives
\begin{align*}
|D^\varepsilon(t,x)|&\le  \mathbb{E}_x[|D^\varepsilon(t-s^\varepsilon_1,x^\varepsilon_1)|1_{\{s^\varepsilon_1\le t\,;\ |x_1^\varepsilon|\le R\}}] +\mathbb{E}_x[|D^\varepsilon(t-s^\varepsilon_1,x^\varepsilon_1)|1_{\{s^\varepsilon_{\tau_{R,\varepsilon}}\le t\,;\ \tau_{R,\varepsilon}=1\}}]\\
&\quad +C_{R}\,\varepsilon^21_{\{t\le \rho\varepsilon^2\}}+\varepsilon^2 o_{R}(1)\\
&\le  \mathbb{E}_x[|D^\varepsilon(t-s^\varepsilon_1,x^\varepsilon_1)|1_{\{s^\varepsilon_1\le t\,;\ |x_1^\varepsilon|\le R\}}]+Ce^{\kappa R}\,\mathbb{P}_x(s^\varepsilon_{\tau_{R,\varepsilon}}\le t;\ \tau_{R,\varepsilon}=1)\\
&\quad+C_{R}\,\varepsilon^21_{\{t\le \rho\varepsilon^2\}}+\varepsilon^2 o_{R}(1).
\end{align*}
In order to simply the notations when iterating the procedure, we introduce the following events:
\begin{align*}
\mathcal{J}_{R,\varepsilon}^n:=\{ s^\varepsilon_n\le t\}\cap\{ \tau_{R,\varepsilon}>n\}.
\end{align*}
By iterating the upper-bound, we obtain
\begin{align}
\label{eq:ensem}
|D^\varepsilon(t,x)|&\le  \mathbb{E}_x[|D^\varepsilon(t-s^\varepsilon_2,x^\varepsilon_2)|1_{\mathcal{J}_{R,\varepsilon}^2}]+C_R\,\varepsilon^2\,\mathbb{P}_x(t-\rho\varepsilon^2\le
s_1^\varepsilon\le t\,;\,\mathcal{J}_{R,\varepsilon}^2 )\nonumber\\
&\quad +C\,e^{\kappa R}\,\mathbb{P}_x(s^\varepsilon_{\tau_{R,\varepsilon}}\le t;\ \tau_{R,\varepsilon}\le 2)+C_{R}\,\varepsilon^21_{\{t\le \rho\varepsilon^2\}}+2\,\varepsilon^2 o_{R}(1)\nonumber\\
&\le \mathbb{E}_x[|D^\varepsilon(t-s^\varepsilon_n,x^\varepsilon_n)|1_{\mathcal{J}_{R,\varepsilon}^n}]+C_R\,\varepsilon^2\sum_{k\ge 1}\mathbb{P}_x(t-\rho\varepsilon^2\le
s_k^\varepsilon\le t\,;\mathcal{J}_{R,\varepsilon}^k)\nonumber\\
&\quad +C\,e^{\kappa R}\,\mathbb{P}_x(s^\varepsilon_{\tau_{R,\varepsilon}}\le t;\ \tau_{R,\varepsilon}\le n)+C_{R}\,\varepsilon^21_{\{t\le \rho\varepsilon^2\}}+n\,\varepsilon^2 o_{R}(1)\nonumber\\
&\le C_R\Big(\mathcal{A}_1(R,\varepsilon,n)+\mathcal{A}_2(R,\varepsilon)+\mathcal{A}_3(R)+\mathcal{A}_4(R,\varepsilon,n)\Big),
\end{align}
with
\begin{eqnarray*}
\begin{array}{ll}
\ds\mathcal{A}_1(R,\varepsilon,n) :=\,\mathbb{P}_x(\mathcal{J}_{R,\varepsilon}^n),&\ds\mathcal{A}_2(R,\varepsilon)=\varepsilon^2\sum_{k\ge 1}\mathbb{P}_x(t-\rho\varepsilon^2\le
s_k^\varepsilon\le t\,;\mathcal{J}_{R,\varepsilon}^k),\\[5pt]
\ds\mathcal{A}_3(R)=e^{\kappa R}\,\mathbb{P}(\exists s\le  t:\, x+B_s\notin [-R,R]),&\mathcal{A}_4(R,\varepsilon,n)=\varepsilon^21_{\{t\le \rho\varepsilon^2\}}+n\,\varepsilon^2 o_{R}(1).
\end{array}
\end{eqnarray*}
We shall now describe precisely the bound of each of these terms. The crucial idea is to first fix $R$ sufficiently large and then to choose $n=\xi\lfloor 1/\varepsilon^2\rfloor$ for $\xi$ large enough and depending on $R$.
Let $\delta>0$. We shall prove that there exists $\varepsilon_0$ such that $|D^\varepsilon(t,x)|\le \delta$ for $\varepsilon\le \varepsilon_0$.
\begin{enumerate}
\item Due to the reflection principle of the Brownian motion, there exists $R$ large enough such that
\begin{equation}\label{eq:111}
\mathcal{A}_3(R)\le 4e^{\kappa R}\,\mathbb{P}(B_t\ge R-|x|)=4e^{\kappa R}\,\mathbb{P}\Big(G\ge \frac{R-|x|}{\sqrt{t}}\Big)\le \delta/4,
\end{equation}
where $G$ is a standard Gaussian variate. From now on, $R$ is fixed s.t. \eqref{eq:111} is satisfied.
\item Let us consider the term $\mathcal{A}_2$. We introduce the particular choice $n=\xi\lfloor 1/\varepsilon^2\rfloor$ with $\xi\in\mathbb{N}$. By the definition of the Brownian skeleton, $s_n^\varepsilon\le t$ corresponds to
\[
\sum_{k=1}^{\xi\lfloor 1/\varepsilon^2\rfloor}\varepsilon^2\eta^2(x_{k-1}^\varepsilon)e^{1-A_k}\le t,
\]
where $(A_k)_{k\ge 1}$ is a sequence of i.i.d random variables. Since $\eta$ is an even function and decreases on $\R_+$, we observe :
\[
\mathcal{J}_{R,\varepsilon}^n\subset \left\{\frac{\varepsilon^2}{\xi}\sum_{k=1}^{\xi\lfloor 1/\varepsilon^2\rfloor}e^{1-A_k}\le \frac{t}{\xi\eta^2(R)},\right\},\quad \forall\xi\in\mathbb{N}.
\]
By the law of large numbers, the left hand side of the inequality converges towards $\mathbb{E}[e^{1-A_1}]$ as $\varepsilon\to 0$. Hence, as soon as $\xi>t/(\eta^2(R)\mathbb{E}[e^{1-A_1}])$, there exists $\varepsilon_1>0$ such that $\mathcal{A}_1(R,\varepsilon,n)\le \delta/4$ for $\varepsilon\le \varepsilon_1$ and $n=\xi\lfloor 1/\varepsilon^2\rfloor$. 
\item Let us now deal with $\mathcal{A}_4$. The parameters $R$ and $\xi$ have already been fixed and $n=\xi\lfloor 1/\varepsilon^2\rfloor$. It is therefore obvious that there exists a constant $\varepsilon_2>0$ such that $\mathcal{A}_4(R,\varepsilon,n)\le \delta/4$ for $\varepsilon\le \varepsilon_2$.
\item Finally we focus our attention on the last term $\mathcal{A}_2$ ($R$ being fixed). We introduce the notation $\chi(A,B)=1_{A\cap B}$ and the stopping time
\[
\mathfrak{z}=\inf\{n\ge 0:\ s_n^\varepsilon\ge t-\rho\varepsilon^2\}.
\] 
Then
\begin{align*}
\varepsilon^{-2}\mathcal{A}_2(R,\varepsilon)&=\mathbb{E}\left[ \sum_{k\ge 1} \chi(t-\rho\varepsilon^2\le s_k^\varepsilon\le t, k<\tau_{R,\varepsilon})\right]=\mathbb{E}\left[ \sum_{k\ge \mathfrak{z}} \chi(s_k^\varepsilon\le t, k<\tau_{R,\varepsilon})\right]\\
&\le 1+\mathbb{E}\left[ \sum_{k\ge 1} \chi(U^\varepsilon_{\mathfrak{z}+1}+\ldots+U^\varepsilon_{\mathfrak{z}+k}  \le t-s_\mathfrak{z}^\varepsilon, \mathfrak{z}+k<\tau_{R,\varepsilon})\right].
\end{align*}
By definition $U^\varepsilon_n=\varepsilon^2\eta^2(x_{n-1}^\varepsilon)e^{1-A_n}\ge \varepsilon^2\eta^2(R)e^{1-A_n}$ for any $n<\tau_{R,\varepsilon}$, since $\eta$ is decreasing on $\mathbb{R}_+$ and corresponds to an even function. Moreover the definition of $\mathfrak{z}$ implies $t-s_\mathfrak{z}^\varepsilon\le \rho\varepsilon^2$. We deduce that
\begin{align*}
\varepsilon^{-2}\mathcal{A}_2(R,\varepsilon)&\le 1+\mathbb{E}\left[ \sum_{k\ge 1} \chi\Big(e^{1-A_{\mathfrak{z}+1}}+\ldots+e^{1-A_{\mathfrak{z}+k}}  \le\frac{\rho}{\eta^2(R)}, \mathfrak{z}+k<\tau_{R,\varepsilon}\Big)\right].
\end{align*}
Since $(e^{1-A_n})_{n\ge 1}$ is a sequence of i.i.d random variables, we can define the associate renewal process $(\overline{N}_t)_{t\ge 0}$ already introduced in the proof of  Proposition \ref{prop:ren}. We obtain
\begin{align*}
\varepsilon^{-2}\mathcal{A}_2(R,\varepsilon)&\le 1+\mathbb{E}\left[ \sum_{k\ge 1} \chi\Big(e^{1-A_1}+\ldots+e^{1-A_k}  \le\frac{\rho}{\eta^2(R)}\Big)\right]\le 1+\mathbb{E}[\overline{N}_{\frac{\rho}{\eta^2(R)}}]<\infty.
\end{align*}
In other words, there exists $\varepsilon_3>0$ s.t. $\mathcal{A}_2(R,\varepsilon)\le \delta/4$ for any $\varepsilon\le \varepsilon_3$.
\end{enumerate}
Let us combine the asymptotic analysis of each term in \eqref{eq:ensem}. Then, for any $\delta>0$, we define $\varepsilon_0:=\min(\varepsilon_1,\varepsilon_2,\varepsilon_3)$ which insures the announced statement: $|D^\varepsilon(t,x)|\le \delta$ for any $\varepsilon\le\varepsilon_0$. 
\end{proof}

\section{Numerical application}
Let us focus our attention on particular examples of $L$-class diffusion processes. We recall that these diffusion processes are characterized by their drift term $a(t)x+b(t)$ and their diffusion coefficient $\overline{\sigma}(t)$. In many situations, both the particular function $f(t,x)$ and the time scale $\rho(t)$ which permit to write the diffusion process as a function of the time-changed Brownian motion $X_t=f(t,x_0+B_{\rho(t)})$ have an explicit formula. We propose two particular cases already introduced in exit problem studies \cite{herrmann2020approximation}.

For each one of these cases we first illustrate some of the results obtained in the theoretical part. Secondly we compare our approach with classical schemes like the Euler scheme. Even if this comparaison is quite difficult as our method looks for a control on the path with an $\varepsilon$ approximation while the classical methods do not follow this objective, we construct a rough comparaison that we explain later on.  \\[5pt]
{\bf Example 1 (periodic functions).} We set:
\begin{equation}
\label{def:ex:1}
a(t)=\frac{\cos(t)}{2+\sin(t)},\quad b(t)=\cos(t),\quad\mbox{and}\quad  \overline{\sigma}(t)=2+\sin(t).
\end{equation}
Then the three basic components of the $\varepsilon$-strong approximation (see Theorem \ref{thm:General}) are given by  $\rho(t)=4t$,
\[
f(t,x)=(2+\sin(x))\Big( \frac{x}{2}+\ln\Big(1+\frac{\sin(t)}{2}\Big) \Big)\quad\mbox{and}\quad F(x)=3+\frac{\sqrt{|x|}}{2}.
\]
We observe that the simulation of a $\varepsilon$-strong approximation of the diffusion paths $(X_t,\ t\in[0,1])$ requires a random number of $\phi_\varepsilon$-domains illustrated by the histogram in Figure \ref{Fig}. 

\begin{figure}[h]
\centerline{\includegraphics[width=8cm]{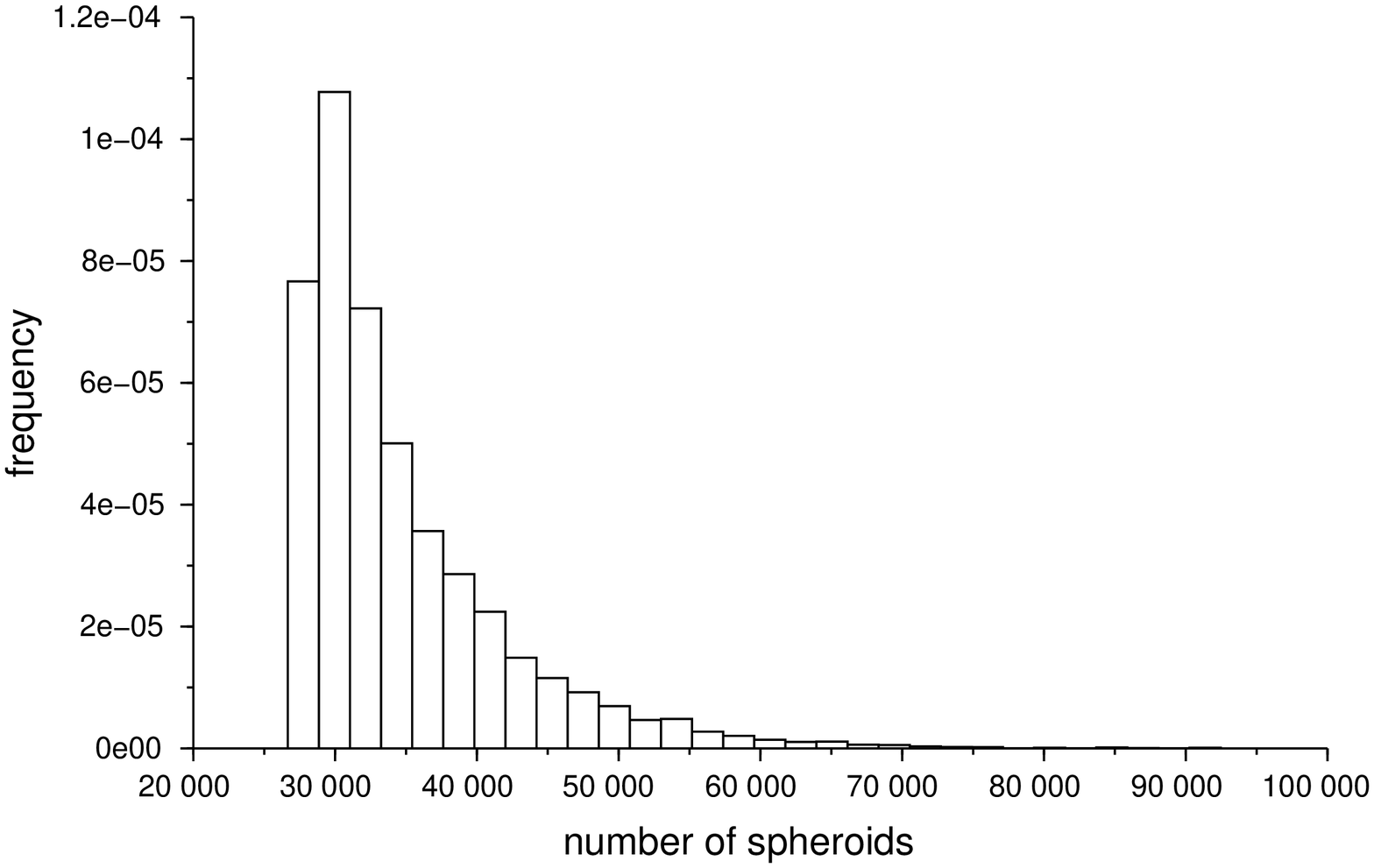}\includegraphics[width=8cm]{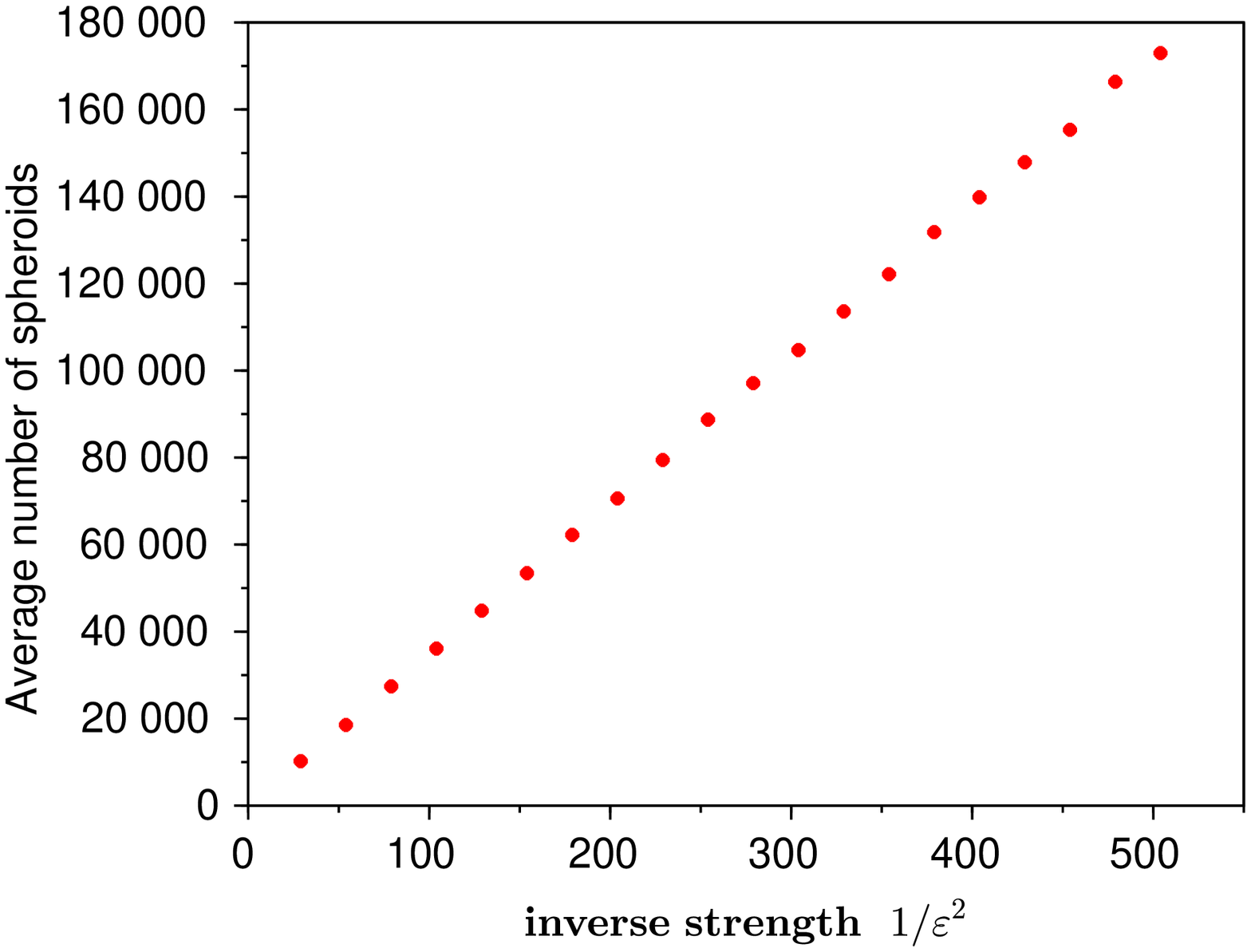}}
\caption{\small Histogram of the number of $\phi_\varepsilon$-domains used to cover the time interval $[0,1]$ for $\varepsilon=0.1$ (left, sample of size $10\,000$) -- Average number of $\phi_\varepsilon$-domains versus the inverse strength $1/\varepsilon^2$ (right, sample of size for each point: $1\,000$). For both pictures:  $x_0=0$.}
\label{Fig}
\end{figure}

As said before, it is quite difficult to compare such a method with other numerical approximations of diffusion processes: the other methods don't lead to build paths which are a.s. $\varepsilon$-close to the diffusion ones. Let us nevertheless sketch a rough comparison: the simulation of $10\,000$ paths on $[0,1]$ with $\varepsilon=0.1$ requires about $255.7$ sec 
and one can observe that the average time step is about $3\cdot 10^{-5}$.      
If we consider the classical Euler-scheme with the corresponding constant step size, then a similar sample of paths requires about $41.3$ sec 
(on the same computer).  One argument which permits to explain the difference in speed is that the $\varepsilon$-strong approximation needs at each step to test if the number of $\phi_\varepsilon$-domains used so far is sufficient to cover the time interval, such a test is quite time-consuming. Let us also note that the $\varepsilon$-strong approximation permits to be precise not only in the approximation of the marginal distribution but also in the approximation of the whole trajectory. In other words, it is a useful tool for Monte Carlo estimation of an integral, of a supremum, of any functional of the diffusion.

This first example illustrates also the convergence result presented in Theorem \ref{thm:efficient:diff}. Since the limiting value is expressed as an average integral of a Brownian motion path,  the use of the Monte Carlo procedure permits to get an approximated value: 347.1 on one hand and on the other hand the estimation of the regression line in Figure \ref{Fig} (right) indicates  
\[
\mathcal{M}(\hat{N}^\varepsilon_1)\approx 344.3\times \frac{1}{\varepsilon^2}+418.5
\]
where $\mathcal{M}$ corresponds to the estimated average value for the sample of size $10\,000$. \\[5pt]

\noindent {\bf Example 2 (polynomial decrease).} We consider on the time interval $[0,1]$ the $\varepsilon$-strong approximation of the mean reverting diffusion process given by
\begin{equation}
\label{def:ex:2}
a(t)=\frac{1}{2}\frac{1}{1+t},\quad b(t)=0,\quad\mbox{and}\quad  \overline{\sigma}(t)=2.
\end{equation}
Then we obtain the time-scale function $\rho(t)=4\ln(1+t)$ and
\(
f(t,x)=x\sqrt{1+t}.
\)
We choose therefore $F(x)=\sqrt{2}+\frac{\sqrt{|x|}}{2}$. The number of $\phi_\varepsilon$-domains is illustrated in Fig \ref{Fig2}. The simulation of a sample of trajectories on the time interval $[0,1]$ of size $10\,000$ requires also about 261 sec for the particular choice $\varepsilon=0.01$ while the classical Euler scheme generated with a comparable step size $3.3\cdot 10^{-5}$ 
requires $30$ sec. 

\begin{figure}[h]
\centerline{\includegraphics[width=8cm]{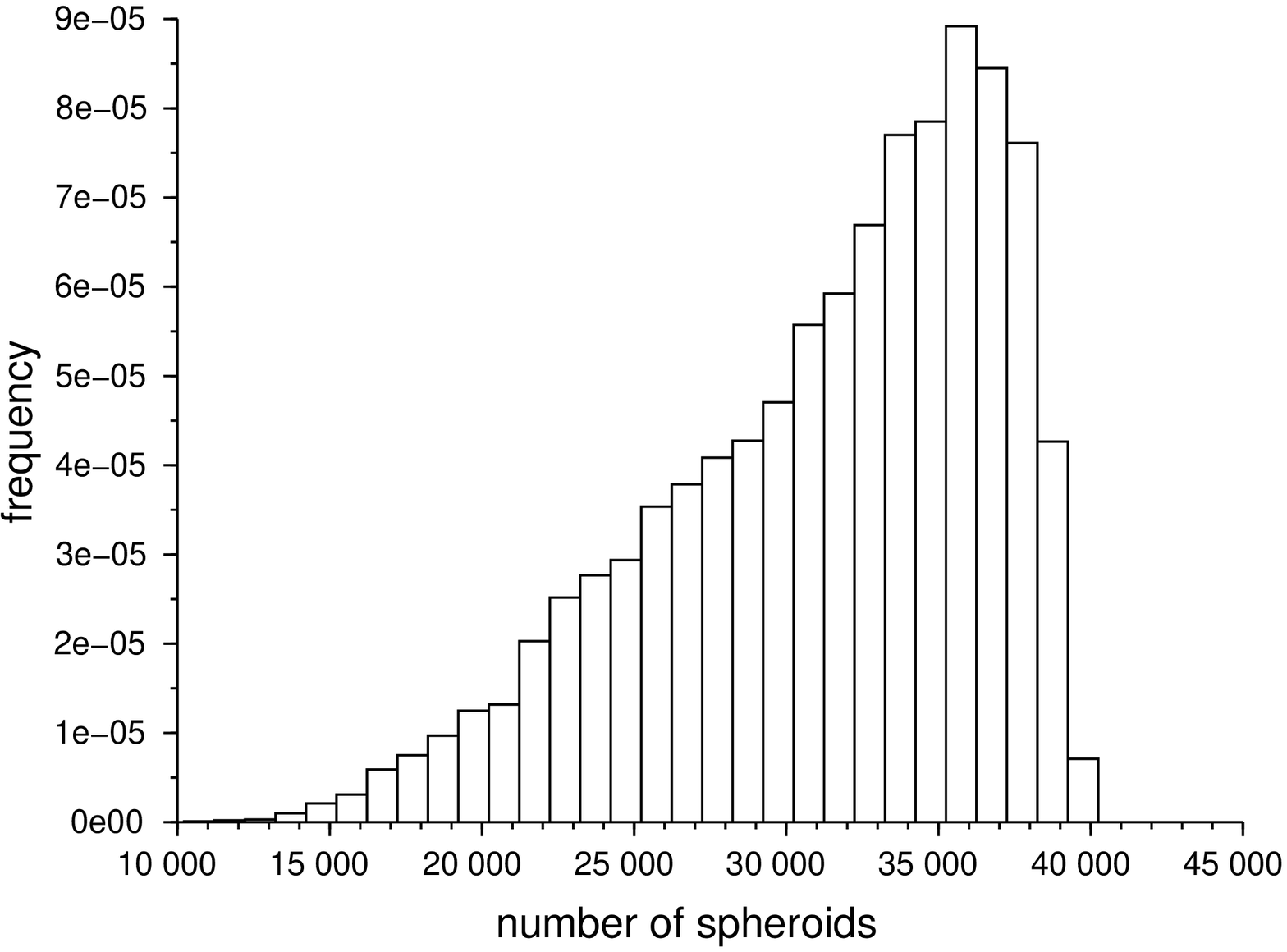}\includegraphics[width=8cm]{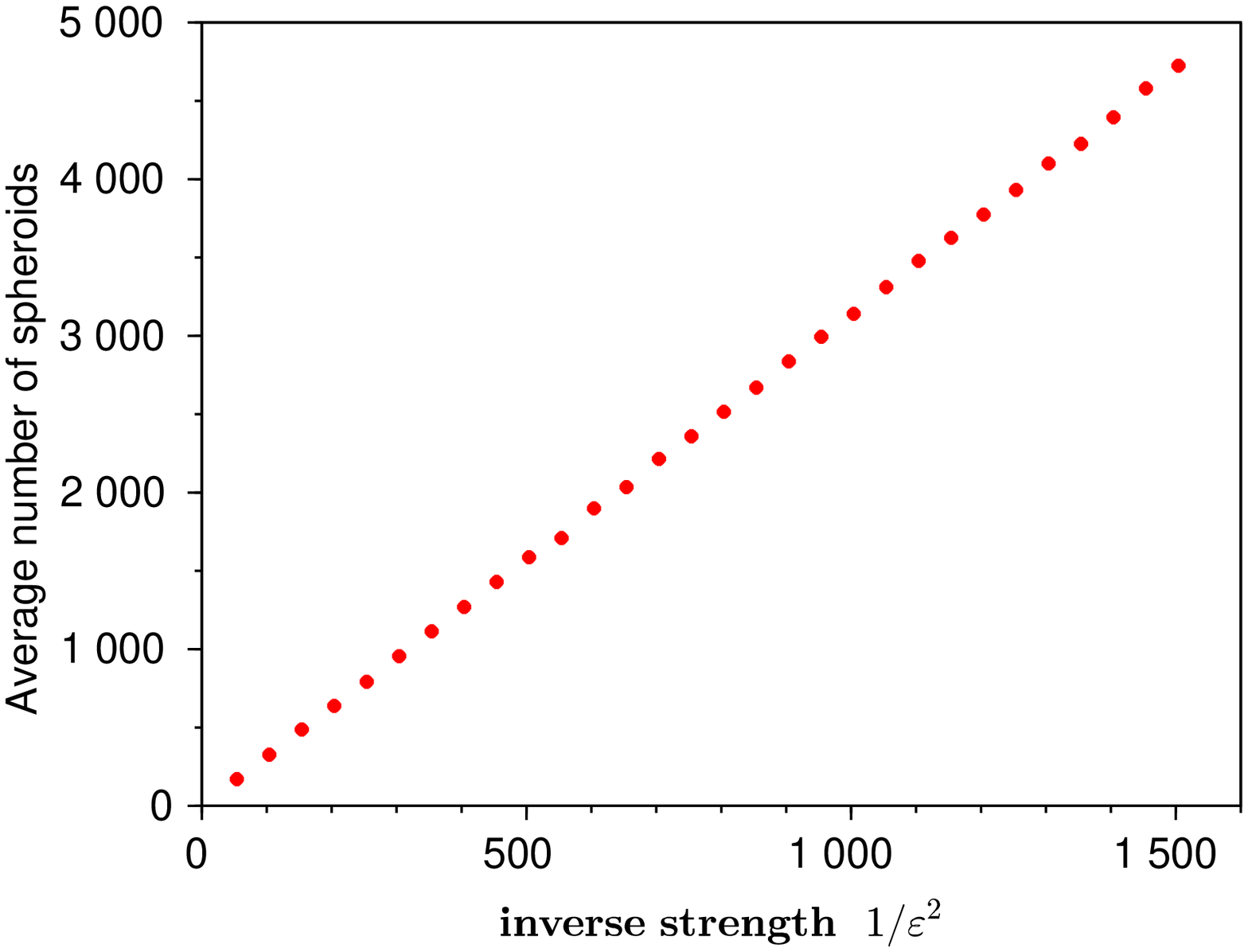}}
\caption{\small Histogram of the number of $\phi_\varepsilon$-domains used to cover the time interval $[0,1]$ for $\varepsilon=0.01$ (left, sample size: $10\,000$) -- Average number of $\phi_\varepsilon$-domains versus the inverse strength $1/\varepsilon^2$ (right, sample size $1\,000$). For both pictures:  $x_0=0$.}
\label{Fig2}
\end{figure}

\section*{Appendix}
Let us just present here useful upper-bounds related to the log-gamma distribution. %
\begin{lemma}\label{lem1} Let $\alpha \geq 1 $ and $\beta \geq 1$ and let us assume that $W$ is a random variable of a log-gamma distribution type. Its 
 probability distribution function is
\[
f_W(t):=\frac{1}{\Gamma(\alpha)\beta^\alpha}(-\ln t)^{\alpha-1}t^{1/\beta-1}1_{[0,1]}(t),\quad \forall t\in\mathbb{R}.
\]
\begin{itemize}
\item[(1)]  Then
\begin{equation}\label{eq:laplace}
\mathbb{E}[e^{-\lambda W}]=\sum_{k\ge 0}\frac{(-1)^k\lambda^k}{k!(1+k\beta)^\alpha}.
\end{equation}
\item[(2)]  In particular, for $\alpha=1$, we get
\begin{equation}\label{eq:bound=alpha1}
\mathbb{E}[e^{-\lambda W}]\le \frac{\Gamma(1/\beta)}{\beta \lambda^{1/\beta}}.
\end{equation}
\item[(3)]  In the case: $\alpha>1$, for any $\beta'>\beta$, we obtain
\begin{equation}\label{eq:bound>alpha}
\mathbb{E}[e^{-\lambda W}]\le \omega(\alpha,\beta,\beta')\frac{1}{\lambda^{1/\beta'}},\quad\mbox{with}\  \omega(\alpha,\beta,\beta') %
=\frac{(\alpha-1)^{\alpha-1}\Gamma(1/\beta')}{\Gamma(\alpha)\beta^\alpha e^{\alpha-1}(\beta^{-1}-\beta'^{-1})^{\alpha-1}}.
\end{equation}
\end{itemize}
\end{lemma}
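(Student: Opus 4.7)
The plan is to handle the three parts of the lemma in turn, with parts (2) and (3) reducing to an explicit Laplace-type integral and part (1) being a termwise expansion.

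For (1), the approach is to plug the Taylor expansion $e^{-\lambda t}=\sum_{k\ge 0}(-\lambda t)^k/k!$ into the definition $\mathbb{E}[e^{-\lambda W}]=\int_0^1 e^{-\lambda t}f_W(t)\,\dint t$ and swap the sum and the integral. The resulting coefficients are
\[
\int_0^1 (-\ln t)^{\alpha-1} t^{k+1/\beta-1}\,\dint t,
\]
which, via the substitution $u=-\ln t$, become $\int_0^\infty u^{\alpha-1}e^{-u(k+1/\beta)}\dint u=\Gamma(\alpha)/(k+1/\beta)^\alpha$. Dividing by $\Gamma(\alpha)\beta^\alpha$ produces the factor $(1+k\beta)^{-\alpha}$ and yields \eqref{eq:laplace}. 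The series is absolutely convergent for every $\lambda$, so swapping sum and integral is justified by dominated convergence.

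For (2), take $\alpha=1$ and perform the change of variable $s=\lambda t$ in
\[
\mathbb{E}[e^{-\lambda W}]=\frac{1}{\beta}\int_0^1 e^{-\lambda t}t^{1/\beta-1}\,\dint t=\frac{1}{\beta\lambda^{1/\beta}}\int_0^\lambda e^{-s}s^{1/\beta-1}\,\dint s,
\]
then bound the incomplete gamma integral by the complete one, $\int_0^\lambda\le \int_0^\infty e^{-s}s^{1/\beta-1}\dint s=\Gamma(1/\beta)$. This gives \eqref{eq:bound=alpha1}.

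For (3), the idea is to factor the density so that the logarithmic term is handled separately from a copy of the $\alpha=1$ case with the new parameter $\beta'$. Writing
\[
(-\ln t)^{\alpha-1}t^{1/\beta-1}=\bigl[(-\ln t)^{\alpha-1}t^{1/\beta-1/\beta'}\bigr]\cdot t^{1/\beta'-1},
\]
I would bound the bracketed function by its maximum over $(0,1]$. With $u=-\ln t$ and $c=1/\beta-1/\beta'>0$, the function $u\mapsto u^{\alpha-1}e^{-cu}$ attains its maximum at $u=(\alpha-1)/c$, with value $((\alpha-1)/(ec))^{\alpha-1}$. Substituting this bound into the integral and then applying (2) with $\beta$ replaced by $\beta'$ yields
\[
\mathbb{E}[e^{-\lambda W}]\le \frac{1}{\Gamma(\alpha)\beta^\alpha}\Bigl(\frac{\alpha-1}{ec}\Bigr)^{\alpha-1}\cdot\frac{\Gamma(1/\beta')}{\lambda^{1/\beta'}},
\]
which is exactly \eqref{eq:bound>alpha} once $c$ is expanded. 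The only delicate point — and the main place where one must not slip — is to keep the condition $\beta'>\beta$ so that $c>0$ and the maximum is actually finite; this is why no direct analogue for $\beta'=\beta$ is available and one must lose a little in the exponent to gain an explicit constant.
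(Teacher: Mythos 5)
Your proposal is correct and follows essentially the same route as the paper: part (1) is the same moment computation (the paper computes $\mathbb{E}[W^k]=(1+k\beta)^{-\alpha}$ and sums, which is your termwise expansion), part (2) is the identical change of variable with the incomplete gamma bounded by $\Gamma(1/\beta)$, and part (3) uses the same factorization $(-\ln t)^{\alpha-1}t^{1/\beta-1/\beta'}\cdot t^{1/\beta'-1}$ with the same sup bound $\sup_{u>0}u^{\alpha-1}e^{-cu}=((\alpha-1)/(ec))^{\alpha-1}$, which the paper phrases as $-u^{r}\ln u\le (re)^{-1}$. Nothing to add.
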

\begin{proof}
For $(1)$, let us first note that an easy computation leads to the following moments, for any $k\ge 1$:
\begin{equation}
\label{moment-log-gamma}
\mathbb{E}[W^k]=(1+k\beta)^{-\alpha}.
\end{equation}
After summing over $k$ \eqref{moment-log-gamma} we deduce the expression of the Laplace transform \eqref{eq:laplace}. \\
For $(2)$, let us first consider the particular case: $\alpha=1$. Using the expression of the PDF and the change of variable $u=\lambda x$, we obtain
\begin{align*}
\mathbb{E}[e^{-\lambda W}]&=\frac{1}{\beta}\int_0^1 e^{-\lambda x} x^{1/\beta-1}\,\dint x=\frac{\Gamma(1/\beta)}{\beta \lambda^{1/\beta}}\,I_\beta(\lambda),
\end{align*}
where $I_\beta(\lambda):=\frac{1}{\Gamma(1/\beta)}\int_0^\lambda e^{-u}u^{1/\beta-1} \dint u$. We observe that $\lambda\mapsto I_\beta(\lambda)$ is increasing and $\lim_{\lambda\to\infty}I_\beta(\lambda)=1$ which leads to \eqref{eq:bound=alpha1}. \\
For $(3)$, let us now assume that $\alpha>1$ and consider $\beta'>\beta$. Then
\begin{align*}
\mathbb{E}[e^{-\lambda W}]&=\frac{1}{\Gamma(\alpha)\beta^\alpha}\int_0^1 e^{-\lambda x} \Big( -x^{(1/\beta-1/\beta')/(\alpha-1)}\ln x \Big)^{\alpha-1}x^{1/\beta'-1}\,\dint x\\
&\le \left(\frac{\alpha-1}{e\beta^{\alpha/(\alpha-1)}(\beta^{-1}-\beta'^{-1})}\right)^{\alpha-1}\frac{1}{\Gamma(\alpha)}\int_0^1 e^{-\lambda x}x^{1/\beta'-1}\dint x,
\end{align*}
since $-u^r\ln u\le (re)^{-1}$. Moreover the change of variable $u=\lambda x$ leads to
\[
\mathbb{E}[e^{-\lambda W}]\le \frac{\omega(\alpha,\beta,\beta')}{\lambda^{1/\beta'}}\, I_{\beta'}(\lambda).
\]
The bound $I_{\beta'}(\lambda)\le 1$ directly leads to \eqref{eq:bound>alpha}. 

\end{proof}

\bibliographystyle{plain}

\end{document}